\title{Newton Polytopes of Cluster Variables of Type $A_n$}
\author{Adam Kalman}
\address{Department of Mathematics, University of California, Berkeley, 
Evans Hall Room 743, Berkeley, CA 94720}
\email{akalman@math.berkeley.edu}
\keywords{} 
\newtheorem{theorem}{Theorem}[section]
\newtheorem{proposition}[theorem]{Proposition}
\newtheorem{lemma}[theorem]{Lemma}
\newtheorem{example}[theorem]{Example}
\newtheorem{corollary}[theorem]{Corollary}
\newtheorem{remark}[theorem]{Remark}
\newtheorem{definition}[theorem]{Definition}
\newtheorem{notation}[theorem]{Notation}
\newtheorem*{thm1}{Theorem \ref{latticesIsom}}
\newtheorem*{thm2}{Theorem \ref{NTgFromTg}}
\newcommand{\cc}{\gamma}
\newcommand{\R}{\mathbb R}
\newcommand{\indicator}{\mathbbm{1}}
\newcommand{\GTg}{G_{T,\cc}}
\newcommand{\PG}{P(G_{T,\cc})}
\newcommand{\NTg}{N(T,\gamma)}
\newcommand{\Dg}{D(\gamma)}
\newcommand{\Sbar}{\overline{S}}
\newcommand{\thmrefer}[1]{\renewcommand\thetheorem
  {\protect\ref{#1}}\addtocounter{theorem}{-1}}
\begin{document}

\keywords{cluster algebra, Newton polytope, triangulated surfaces} 

\begin{abstract}
We study Newton polytopes of cluster variables in type $A_n$ cluster algebras, whose cluster and coefficient variables are indexed by the diagonals and boundary segments of a polygon. Our main results include an explicit description of the affine hull and facets of the Newton polytope of the Laurent expansion of any cluster variable, with respect to any cluster. In particular, we show that every Laurent monomial in a Laurent expansion of a type $A$ cluster variable corresponds to a vertex of the Newton polytope. We also describe the face lattice of each Newton polytope via an isomorphism with the lattice of elementary subgraphs of the associated snake graph. 
\end{abstract}

\maketitle

\setcounter{tocdepth}{1}
\tableofcontents

\section{Introduction} \label{Intro}

Cluster algebras, introduced by Fomin and Zelevinsky in the early 2000's \cite{FZ1}, are a class of commutative rings equipped with a distinguished set of generators (\emph{cluster variables}) that are grouped into sets of constant cardinality $n$ (the \emph{clusters}). A cluster algebra may be defined from an initial cluster $(x_1,. . .,x_m)$ and a quiver, which contains combinatorial data for the process of \emph{mutation}, in which new clusters and quivers are created recursively from old ones. There may also be coefficients involved in the construction. The \emph{cluster algebra} is the algebra generated by all cluster variables, after mutation is repeated ad infinitum. 

The original motivation for cluster algebras was to create a combinatorial framework for studying total positivity and dual canonical bases in semisimple Lie groups. Since then, cluster structures have been found and studied in various areas of mathematics. 

Perhaps the most fundamental example of a cluster algebra is the cluster algebra associated with triangulations of a polygon. Cluster algebras of finite type (i.e. those with finitely many cluster variables) are classified by Dynkin diagrams, and the cluster algebras coming from triangulations of a polygon are precisely those of type $A$. In this model, diagonals correspond to cluster variables, triangulations (i.e. maximal collections of non-intersecting diagonals) correspond to clusters, boundary segments correspond to coefficient variables, and mutation corresponds to a local move called a \emph{flip} of the triangulation, in which one diagonal is replaced with another one.

A consequence of the definition of cluster algebra is that every cluster variable is a rational function in the initial cluster variables, but more strongly, the remarkable \emph{Laurent Phenomenon} \cite{FZ1} states that every cluster variable is in fact a Laurent polynomial in those variables. 

In the last ten years, much work has been done on Laurent expansion formulas for cluster algebras. Carroll and Price (in unpublished results \cite{CP}) were the first to discover formulas for Laurent expansions of cluster variables in the case of a triangulated polygon, writing one formula in terms of paths and another in terms of perfect matchings of so-called \emph {snake graphs} \cite{Pr}. Their formula was subsequently rediscovered and generalized in a series of works \cite{S1}, \cite{ST}, \cite{MS}, \cite{MSWp}, with \cite{MSWp} providing Laurent expansions of cluster variables associated to cluster algebras from arbitrary surfaces.

In this paper, we study the Newton polytope of the Laurent expansion of a cluster variable in a type $A$ cluster algebra with respect to an arbitrary cluster. The study of Newton polytopes of Laurent expansions of cluster variables was initiated by Sherman and Zelevinsky in their study of rank 2 cluster algebras, in which it was shown that the Newton polygon of any cluster variable in a rank 2 cluster algebra of finite or affine type is a triangle \cite{SZ}. We will extend these results in type $A$ by considering cluster algebras of arbitrary rank. Another motivation for the study of these Newton polytopes is that understanding Newton polytopes of cluster variables has been useful for understanding bases of cluster algebras \cite{DT, SZ}.
 
Our main results in this paper are explicit descriptions of the affine hull and facets of the Newton polytope of a Laurent expansion of any cluster variable of type $A_n$, as well as a description of the face lattice of such a polytope via an isomorphism with the lattice of elementary subgraphs of the associated snake graph. Our affine hull and facet description can be read off the triangulation directly. We also show that every Laurent monomial in a Laurent expansion of a type $A$ cluster variable corresponds to a vertex of the Newton polytope.

The structure of this paper is as follows. Section \ref{ClusterAlgs} contains a summary of the formula that gives cluster expansions using perfect matchings. Section \ref{Main} lists both major results of this paper, after establishing definitions and notation necessary to state those results. Section \ref{Proofs} includes proofs of the main results, establishing more peripheral results along the way. In Section \ref{Other}, we discuss progress toward more general results.

\textsc{Acknowledgements:} The author would like to thank Lauren Williams, Gregg Musiker, Dylan Thurston, and Christian Hilaire for their helpful ideas. I am particularly grateful to Lauren Williams for many enlightening conversations, ideas, and suggestions. 

\section{Cluster Expansions from Matchings} \label{ClusterAlgs}

The cluster algebra we are considering in this paper is constructed from a triangulation $T$ of an $(n+3)$-gon as follows. Let $\tau_1,\tau_2,\ldots,\tau_n$ be the $n$ diagonals of
$T$, and let $\tau_{n+1},\tau_{n+2},\ldots,\tau_{2n+3}$ be the $n+3$ boundary segments.

The quiver $Q_T$ is defined as follows: place a frozen vertex at the midpoint of each boundary segment of the polygon, and place a mutable vertex at the midpoint of each diagonal. These midpoint vertices form the vertices of $Q_T$. Label these vertices according to the labeling of the polygon. To form the arrows of $Q_T$, go to each triangle of $T$ and inscribe a new triangle connecting the midpoint vertices, orienting the arrows clockwise within this new triangle. For example, here is a triangulation $T$ of a hexagon, along with the corresponding quiver $Q_T$. The diagonals and boundary segments of $T$ are shown as thin solid lines. Mutable vertices of the quiver are indicated by filled-in circles, frozen vertices are indicated by unfilled circles, and the arrows of the quiver are dashed lines.

\begin{figure}[h!]
\centering
\includegraphics[scale=0.4]{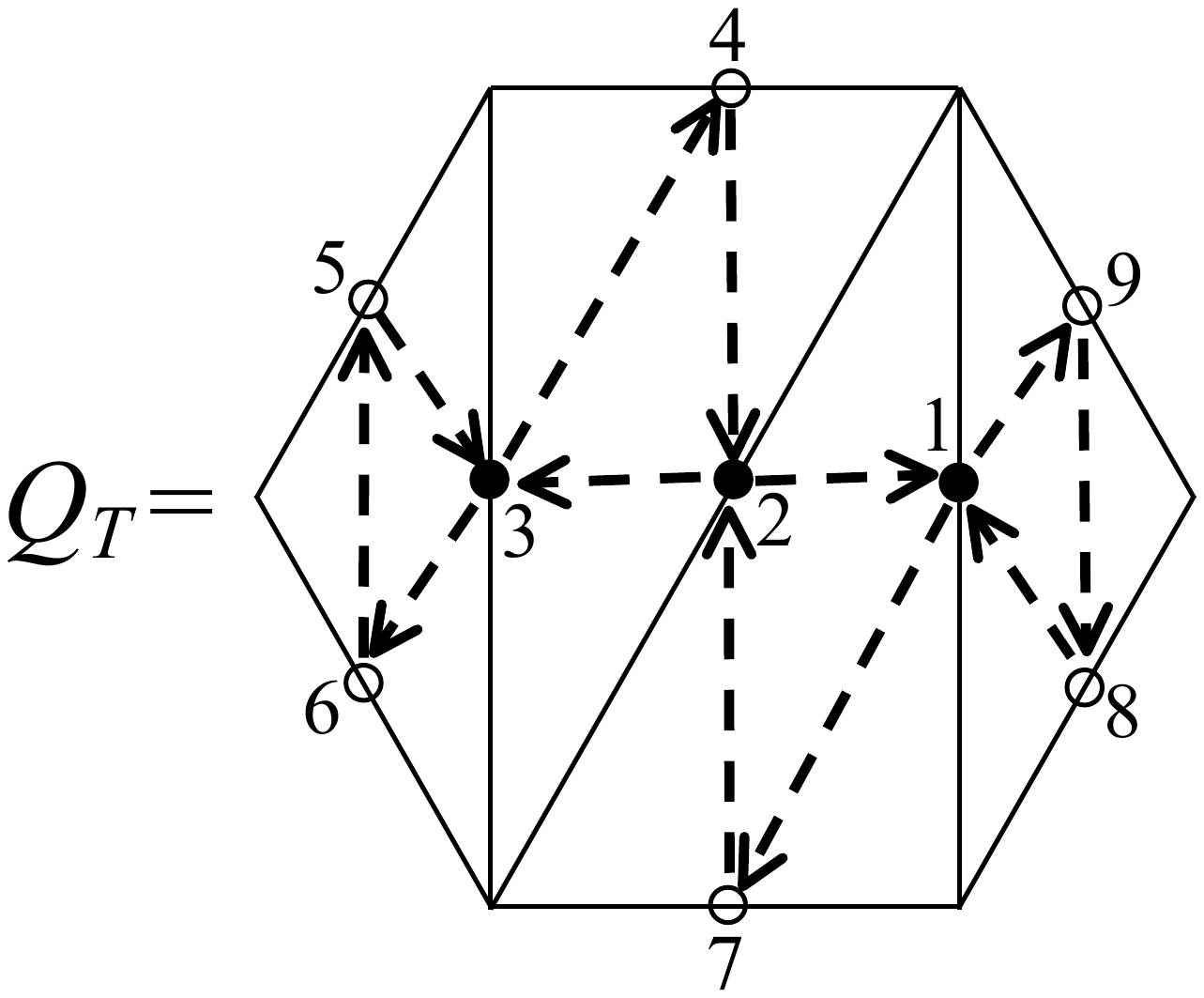}
\label{fig:quiver}
\end{figure}

Let $\mathcal{A}(Q_T)$ be the cluster algebra with initial cluster variables $(x_1,...,x_n)$, coefficient variables $(x_{n+1},...,x_{2n+3})$, and initial quiver $Q_T$. Each cluster variable in $\mathcal{A}(Q_T)$ corresponds to a diagonal. Let $x_\cc$ be the cluster variable corresponding to the diagonal $\cc$. 

The \emph{cluster expansion of $x_\cc$ with respect to $T$}, or the \emph{$T$-expansion of $x_\cc$},  means the Laurent polynomial (equal to $x_\cc$) in the variables which each correspond to a diagonal or boundary segment of $T$. The formula for the $T$-expansion of $x_\cc$ in \cite{MS} for the cluster variables is given in terms of perfect matchings of a graph $G_{T,\cc}$ that is constructed using recursive gluing of \emph{tiles}. We now recount the construction of this graph $G_{T,\cc}$, as described in \cite{MS} and \cite{MSW}.

Let $\cc$ be a diagonal which is not in $T$. Choose an orientation on $\cc$, and let the points of intersection of $\cc$ and $T$, in order, be $p_0, p_1, \ldots, p_{d+1}$. Let $\tau_{i_1}, \tau_{i_2}, \dots, \tau_{i_d}$ be the diagonals of $T$ that are crossed by $\cc$, in order.

For $k$ from $0$ to $d$, let $\cc_k$ denote the segment of the path $\cc$ from $p_k $ to $p_{k+1}$. Note that each $\cc_k$ lies in exactly one triangle in $T$, and for $1\le k\le d-1$, the sides of this triangle are $\tau_{i_k}, \tau_{i_{k+1}}$, and a third edge denoted by $\tau_{[\cc_k]}$.

A \emph{tile} $\Sbar_k$ is a 4-vertex graph consisting of a square along with one of its diagonals. Any diagonal $\tau_k\in T$ is the diagonal of a unique quadrilateral $Q_{\tau_k}$ in $T$ whose sides we will call $\tau_a, \tau_b, \tau_c,\tau_d$. Associate to this quadrilateral a tile $\Sbar_k$ by assigning weights to the diagonal and sides of $\Sbar_k$ in such a way that there is a homeomorphism $Q_{\tau_k} \to \Sbar_k$ which maps the diagonal labeled $\tau_i$ to the edge with weight $x_i$, for $i=a,b,c,d,k$. 

For each tile $\Sbar_{i_1},\Sbar_{i_2},\ldots, \Sbar_{i_d}$, we choose a planar embedding in the following way: For $\Sbar_{i_1}$, the homeomorphism $Q_{\tau_{i_1}} \to \Sbar_{i_1}$ must be orientation-preserving, and the vertex of $\Sbar_{i_1}$ which corresponds to $p_0$ is placed in the southwest corner. Then, for $2 \leq k \leq d$, choose a planar embedding for $\Sbar_{i_k}$ which has the opposite orientation of the previous tile $\Sbar_{i_{k-1}}$, and orient the tile $\Sbar_{i_k}$ so that the diagonal goes from northwest to southeast.

We then create the graph $\overline{G}_{T,\cc}$ by gluing together tiles $\Sbar_{i_1},\Sbar_{i_2},\ldots, \Sbar_{i_d}$, in order, attaching $\Sbar_{i_{k+1}}$ to $\Sbar_{i_k}$ along the edge on each tile that is labeled $x_{[\cc_k]}$. Note that the edge weighted  $x_{[\cc_k]}$ is either the northern or the eastern edge of the tile $\Sbar_{i_k}$, and hence $\overline{G}_{T,\cc}$ is constructed from the bottom left (the first tile) to the upper right (the last tile).

\begin{definition}
The \emph{snake graph} $\GTg$ is the graph obtained from $\overline{G}_{T,\cc}$ after the diagonal is removed from each tile. 
\end{definition}

See Figure \ref{fig:example} for an example of a triangulation $T$ (along with distinguished diagonal $\cc$) and the corresponding snake graph. 

\begin{figure}[h!]
        \centering
        \begin{subfigure}[b]{0.46\textwidth}
                \centering
                \includegraphics[width=\textwidth]{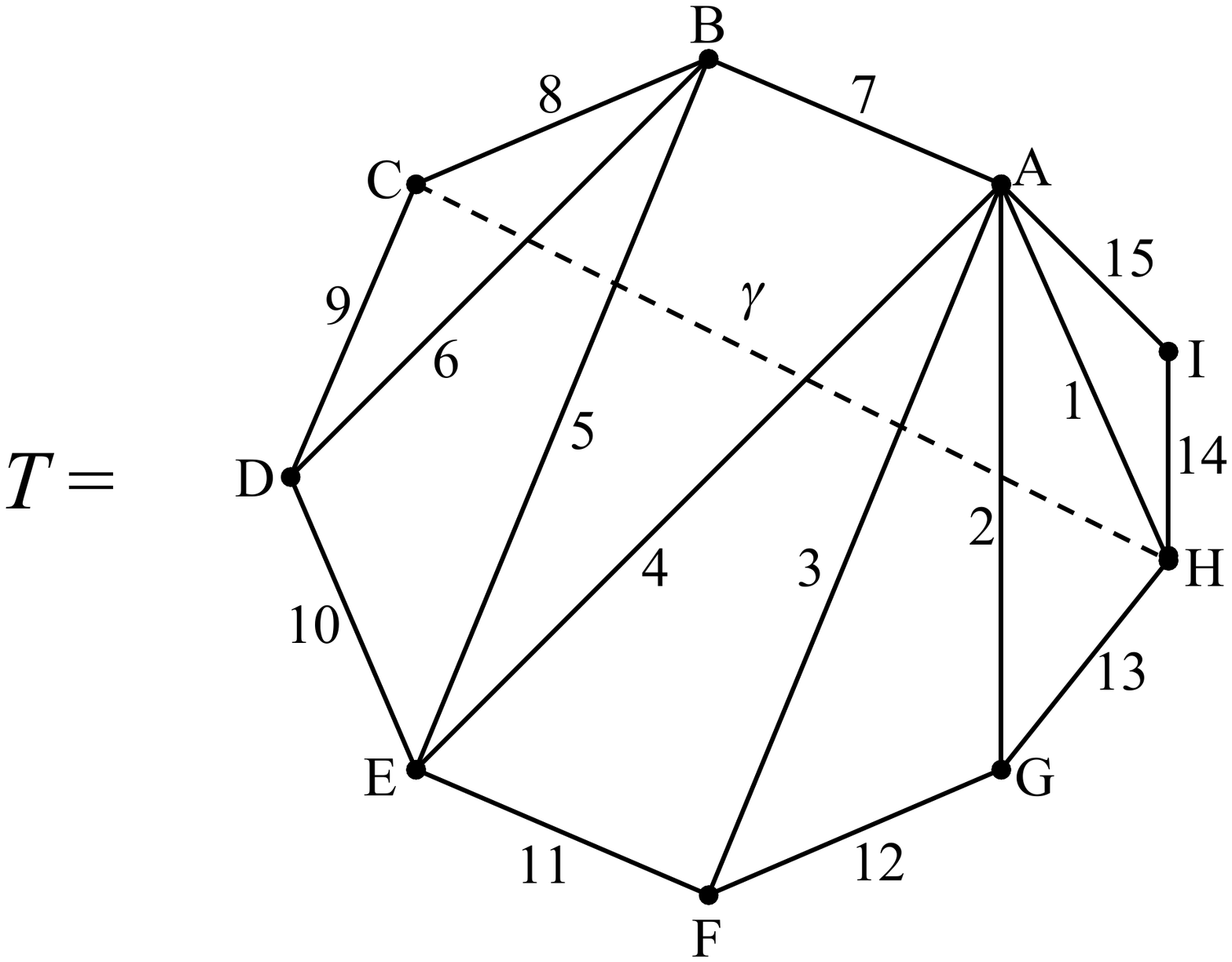}
                \label{fig:T-alpha}
        \end{subfigure}
        \qquad
        \begin{subfigure}[b]{0.47\textwidth}
                \centering
                \includegraphics[width=\textwidth]{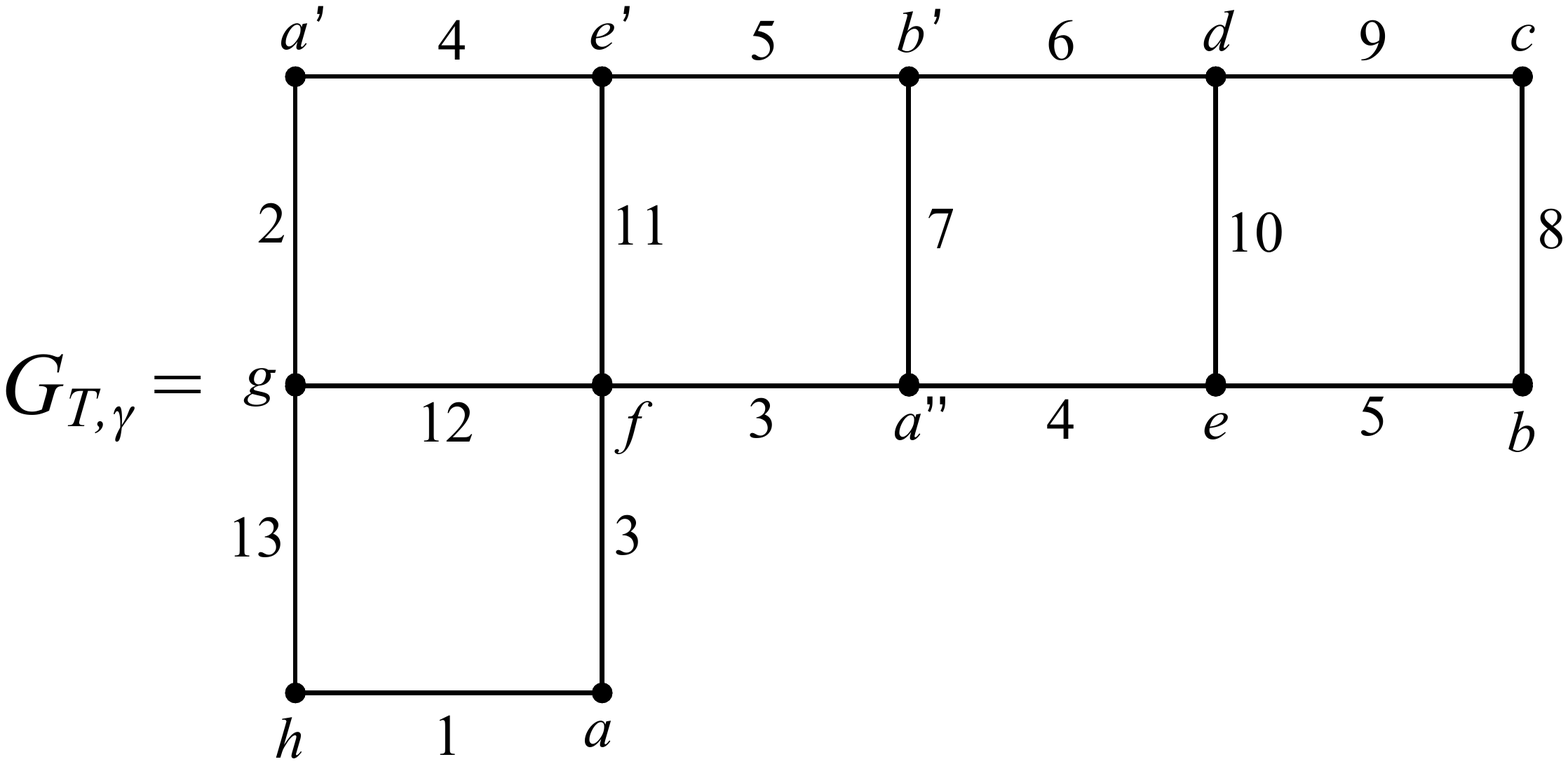}
                \label{fig:G-alpha}
        \end{subfigure}
        \caption{An Example}\label{fig:example}
\end{figure}

\begin{definition}
A \emph{perfect matching} of a graph is a subset of the edges so that each vertex is covered exactly once.  The \emph{weight} $w(M)$ of a perfect matching $M$ is the product of the weights of all edges in $M$. 
\end{definition}

With this setup, Laurent expansions of cluster variables can be expressed in terms of perfect matchings as follows (see \cite{MS}).

\begin{proposition} \label{MSmatching} 
With the above notation,
\[x_\cc = \sum_M \frac{w(M)}{x_{i_1}x_{i_2}\ldots x_{i_d}}, \]
where the sum is over all perfect matchings $M$ of $G_{T,\cc}$.
\end{proposition}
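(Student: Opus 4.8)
The plan is to induct on $d$, the number of diagonals of $T$ crossed by $\cc$. On the algebraic side the engine is the Ptolemy (exchange) relation, and on the combinatorial side it is a decomposition of the perfect matchings of $\GTg$ according to how they cover the last tile $\Sbar_{i_d}$. Writing $m(\GTg)=\sum_M w(M)$ for the matching generating function, the claim is equivalent to showing that $m(\GTg)$ and $x_{i_1}x_{i_2}\cdots x_{i_d}\,x_\cc$ obey the same recursion with the same initial value, so it suffices to verify a base case and to match the two recursions in the inductive step.

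For the base case $d=1$, the graph $\GTg$ is a single square: the tile $\Sbar_{i_1}$ with its diagonal deleted. This square has exactly two perfect matchings, the two ``horizontal'' edges and the two ``vertical'' edges. If $\ttt_a,\ttt_b,\ttt_c,\ttt_d$ are the sides of the quadrilateral $Q_{\ttt_{i_1}}$ in cyclic order, the weights of these matchings sum to $x_ax_c+x_bx_d$, and the Ptolemy relation $x_{i_1}x_\cc=x_ax_c+x_bx_d$ gives the result after dividing by $x_{i_1}$.

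For the inductive step, I would examine how a perfect matching $M$ restricts to the last tile $\Sbar_{i_d}$, which is glued to $\Sbar_{i_{d-1}}$ along its southern or western edge and contributes exactly two new vertices. These two vertices are covered either by the single outer edge of $\Sbar_{i_d}$, in which case the rest of $M$ is an arbitrary perfect matching of the sub-snake-graph on tiles $\Sbar_{i_1},\ldots,\Sbar_{i_{d-1}}$, or by the two transversal edges joining them to the shared edge, in which case the rest of $M$ is a perfect matching of that subgraph with the two vertices of the shared edge deleted. On the algebraic side, the corresponding move is a Ptolemy relation applied at the end of $\cc$, which expresses $x_\cc$ through the cluster variables of two truncated arcs crossing $d-1$ and $d-2$ diagonals; applying the inductive hypothesis to each then produces a recursion of exactly this shape, and one tracks how the denominator $x_{i_1}\cdots x_{i_d}$ factors through it.

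The step I expect to be the main obstacle is the precise identification of the residual graph obtained after deleting the two shared-edge vertices, together with the weight bookkeeping. When the snake continues straight at $\Sbar_{i_{d-1}}$ this residual is cleanly the snake graph on $\Sbar_{i_1},\ldots,\Sbar_{i_{d-2}}$, but when the snake turns there (the north-gluing versus east-gluing dichotomy) the matching of $\Sbar_{i_{d-1}}$ is partly forced, and one must check that the forced edges, reweighted, still reduce the count to that of the doubly-truncated arc. Verifying that the monomial weights read off from the outer and transversal edges of $\Sbar_{i_d}$, multiplied by the denominator $x_{i_1}\cdots x_{i_d}$, reproduce exactly the coefficients of the Ptolemy recursion in both cases is where the bulk of the work lies; once this is checked in each geometric case, the two recursions coincide and the induction closes.
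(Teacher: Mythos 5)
First, note that the paper does not prove this proposition at all: it is quoted from \cite{MS}, so there is no internal proof to compare against. Your strategy (induction on $d$, matching a Ptolemy exchange relation against a decomposition of perfect matchings at the last tile) is the right one and is essentially how such formulas are proved in the literature; your base case and your Case A are correct. The genuine gap is in your identification of the \emph{second} branch of the recursion, on both the algebraic and the combinatorial side. Algebraically, the exchange relation coming from the quadrilateral with diagonals $\gamma$ and $\tau_{i_d}$ is $x_\gamma x_{i_d} = x_{[p,a]}x_{[q,b]} + x_{[p,b]}x_{[q,a]}$, where $p,q$ are the endpoints of $\gamma$, $\tau_{i_d}=[a,b]$, and $a,b,q$ span the last triangle. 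One of the truncated arcs, say $[p,b]$, does cross exactly $\tau_{i_1},\dots,\tau_{i_{d-1}}$; but the other crosses exactly $\tau_{i_1},\dots,\tau_{i_e}$, where $\tau_{i_{e+1}},\dots,\tau_{i_d}$ are precisely the crossed diagonals incident to $a$, and $e$ can be anything from $0$ to $d-2$. One gets $e=d-2$ only when the snake graph is straight at tile $d-1$, i.e.\ when $\tau_{i_{d-2}},\tau_{i_{d-1}},\tau_{i_d}$ have no common endpoint. For a fan triangulation (all crossed diagonals incident to a single vertex) the second arc is a boundary segment for every $d$, and there is no subtraction-free exchange relation expressing $x_\gamma$ through arcs crossing $d-1$ and $d-2$ diagonals; so the relation your induction is built on simply does not exist in general.

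The combinatorial side needs the matching correction. In your Case B, deleting the two shared-edge vertices from the subgraph on tiles $\overline{S}_{i_1},\dots,\overline{S}_{i_{d-1}}$ does not stop forcing edges at tile $d-1$: if tile $d-1$ is a corner, one of its edges (labeled $\tau_{i_{d-2}}$) is forced, which in turn covers a vertex shared with tile $d-2$; if tile $d-2$ is also a corner the forcing propagates again, and in general the cascade runs through all terminal corner tiles $d-1,d-2,\dots,e+1$, leaving a free perfect matching only of the snake graph on tiles $1,\dots,e$ (in the fan example the whole Case-B matching is forced and there is no free part). The forced edges carry exactly the labels $x_{i_{d-2}},\dots,x_{i_{e+1}}$ (plus one boundary label when $e=0$), and these are precisely the factors needed to match the denominators $x_{i_{e+1}}\cdots x_{i_{d-1}}$ that appear when you clear denominators in the \emph{correct} Ptolemy relation. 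So the two recursions do coincide, but only after you replace ``$d-2$'' by ``$e$'' on both sides and prove this forced-cascade description of Case B; as written, both your claimed exchange relation and your claimed residual graph are false whenever $\GTg$ has a corner at its penultimate tile, and the induction does not close.
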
  

\section{Main Results} \label{Main}

Before we can state our main results, we need a few more definitions and some new notation.

\begin{definition}
The \emph{Newton polytope} of a Laurent polynomial is the convex hull of all the exponent vectors of the monomials, i.e. the convex hull of all points $(c_1,c_2,. . .)$ such that the monomial $x_1^{c_1} x_2^{c_2} . . .$ appears with a nonzero coefficient in the Laurent polynomial.
\end{definition}

For ease of notation, we may sometimes say a diagonal or boundary segment of the polygon is labeled $k$ rather than $\tau_k$. 

\begin{notation} {\rm
\begin{itemize}
\item Let $\Dg$ denote the set of diagonals of the triangulation that $\cc$ crosses, i.e. $ \{\tau_{i_1},\tau_{i_2},\ldots,\tau_{i_d} \}$. 
\item Let $T'$ be the subset of $T$ that includes all vertices incident to a diagonal in $\cc \cup \Dg$, and all diagonals and boundary segments connecting these vertices to each other. 
\item For any point $w \in T$, let $diagonals(w) :=  \{e \in (\cc \cup \Dg): e \ni w\}$, the set of diagonals in $\cc \cup \Dg$ incident to $w$.
\item Let the set of distinct labels of edges incident to a vertex $v \in \GTg$ be $E_v$. If $V$ is a collection of vertices, let $E_V := \bigcup_{w \in V} E_w$
\item Let $\NTg$ be the Newton polytope (in $\mathbb{R}^{2n+3}$) of the $T$-expansion of the cluster variable $x_\cc$. 
\item Let $\PG$ be the polytope in $\mathbb{R}^{2n+3}$ that is the convex hull of the characteristic vectors of all perfect matchings of $\GTg$. 
\end{itemize} }
\end{notation}

\begin{remark} \label{PGaNewtPolyOfNumerator}
By Proposition \ref{MSmatching}, the two polytopes $\NTg$ and $\PG$ are isomorphic, differing only by a translation by the vector $\indicator_{\Dg}$ (i.e. the vector whose $i^{th}$ coordinate is 1 if $i \in \Dg$, 0 otherwise). So $\PG$ can be thought of as the ``Newton polytope of the numerator" of the cluster variable corresponding to $\cc$.
\end{remark}

\begin{definition}
Define an equivalence relation $\sim$ on the set of vertices of $\GTg$ as follows: Vertices of $\GTg$ are equivalent if they correspond to the same marked point on the original polygon $T'$, based on how quadrilaterals from the polygon become tiles in $\GTg$. Let the equivalence class of a vertex $v$ be $[v]$. 
\end{definition}

The location of equivalent vertices follows this specific pattern: $v \sim v'$ if one can start at $v$ and reach $v'$ by a sequence of northwest-southeast knight's moves (i.e. we are allowed to make the ``knight's move" in only 4 directions (not 8): left 1 and up 2, left 2 and up 1, right 1 and down 2, or right 2 and down 1). This can be seen by examining the construction of $\overline{G}_{T,\cc}$. Specifically, we see that two triangles incident to the same tile edge must have the same edge labels, and in that way their vertices naturally correspond. (In \cite{MS}, this phenomenon is used to define a ``folding map.") Also note that this equivalence relation on vertices induces an equivalence relation on edges that corresponds to the non-uniqueness of edge labels in the following way. Suppose vertices $v$ and $w$ are adjacent, with an edge labeled $e$ joining them. If $v \sim v'$ and $w \sim w'$, then vertices $v'$ and $w'$ are adjacent, and the edge joining them is also labeled $e$. Conversely, suppose two edges of $\GTg$ (call them $\{v, w\}$ and $\{v' ,w'\}$) have the same label $e$. Then $v \sim v'$ and $w \sim w'$.

\begin{definition}
A tile $\overline{S}$ in $\GTg$ will be called a \emph{corner} if it is incident to two other tiles, one of which is left or right of $\overline{S}$, and one of which is above or below $\overline{S}$. 
\end{definition}
 
\begin{definition} 
A diagonal $e$ in $\Dg$ will be called \emph{balanced} if a pair of opposite sides of $Q_{\tau_e}$ consists of boundary segments of $T'$, and will be called \emph{imbalanced} otherwise. 
\end{definition} 

\begin{definition}
A subgraph $H$ of a bipartite graph $G$ will be called an \emph{elementary subgraph} if $H$ contains every vertex of $G$, and every edge of $H$ is used in some perfect matching of $H$. Equivalently, $H$ is an elementary subgraph if it is the union of some set of perfect matchings of $G$.
\end{definition}

To illustrate some of this vocabulary, we will use the triangulation and snake graph from Figure \ref{fig:example} as an example. 

The set $\Dg = \{2,3,4,5,6\}$, and $T'$ is the graph below. The vertices of $\GTg$ in Figure \ref{fig:example} are labeled with lowercase letters to correspond in a natural way to the vertices of $T'$ (or $T$), labeled in uppercase.

\begin{figure}[h!]
\centering
\includegraphics[scale=0.4]{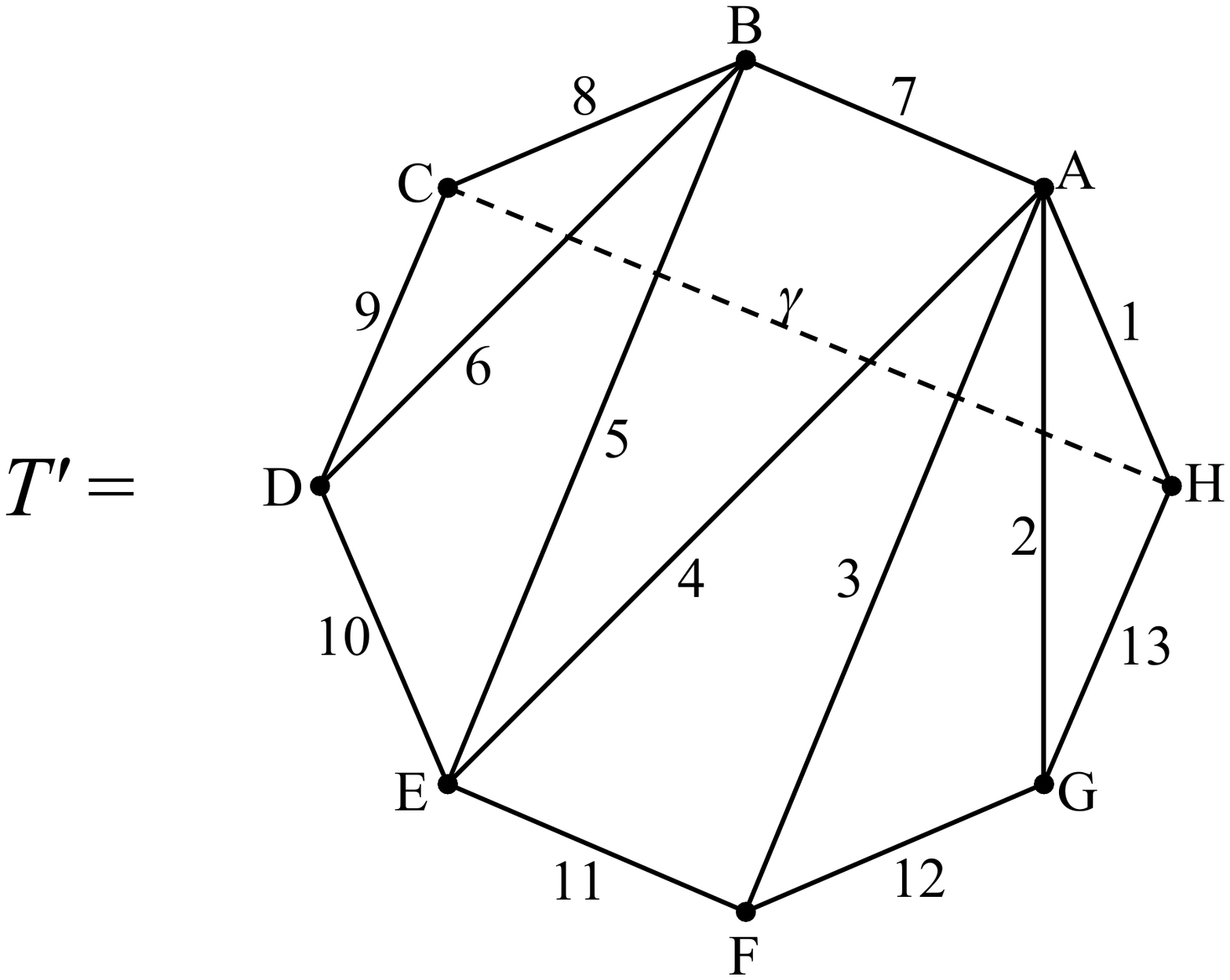}
\label{fig:S-alpha}
\end{figure}

The equivalence class $[a']$ is $\{a,a',a''\}$. Observe the northwest-southeast knight's moves between these vertices of $\GTg$, and notice that this equivalence class corresponds to vertex $A$ of $T'$.

Also, $E_{[a']} = \{1,2,3,4,7\}$, and $diagonals(A) = \{2,3,4\}$.

Note that in $T'$, the diagonal connecting vertices $B$ and $E$ is labeled ``5". Correspondingly, in $\GTg$, the edges $\{b,e\}$ and $\{b',e'\}$ are both labeled ``5".

Moreover, $Q_{\tau_5} = (7,4,10,6)$. The diagonal ``5" is balanced. Note that $\GTg$ has 1 corner - the second tile.

To construct $\PG$, we associate a characteristic vector in $\R^{15}$ to every perfect matching of $\GTg$, and find the convex hull of all these vectors. For example, the matching below gives the vector $(1,1,1,0,2,0,0,0,1,0,0,0,0,0,0)$. 

\begin{figure}[h!]
\centering
\includegraphics[scale=0.3]{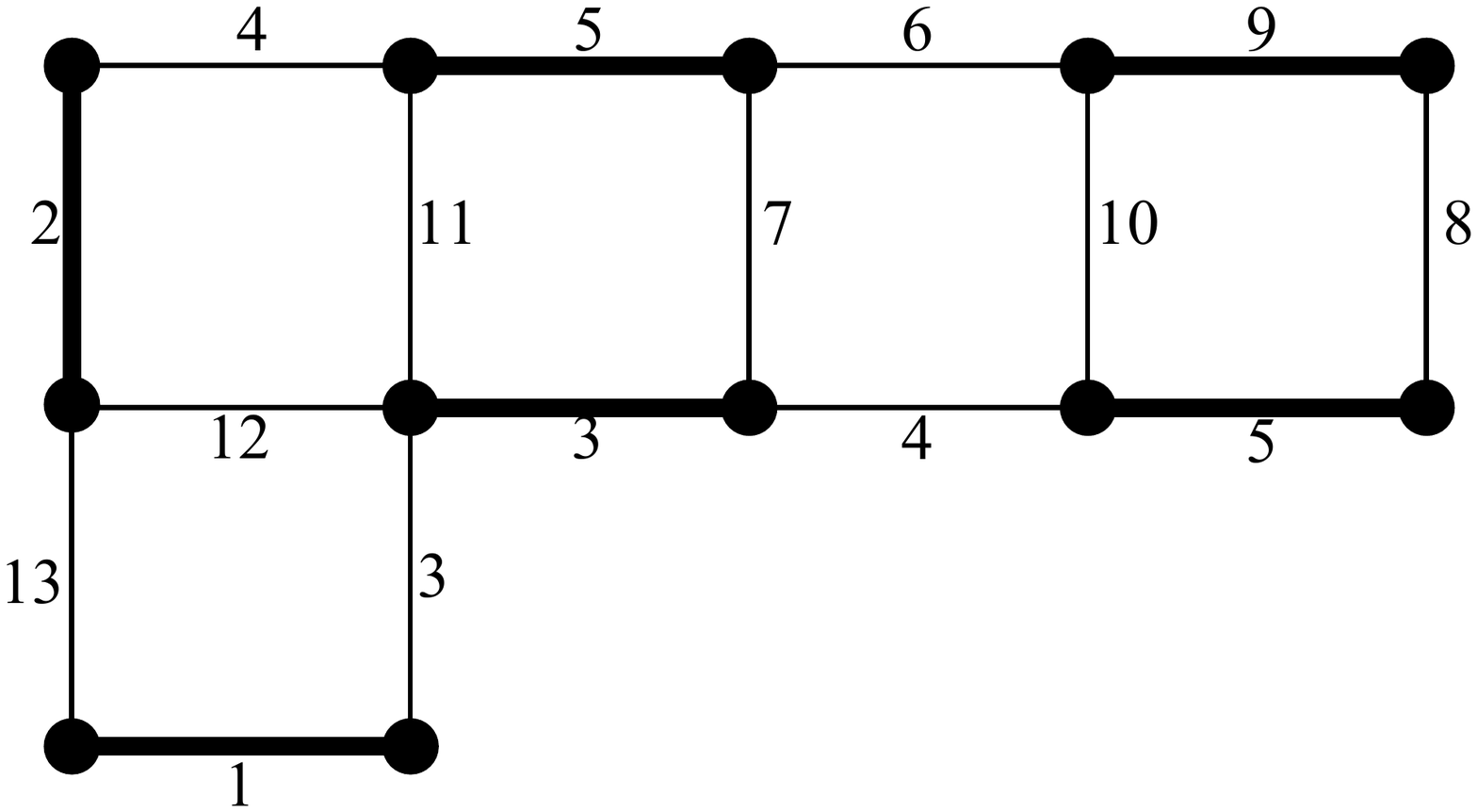}
\label{fig:matching}
\end{figure}

\pagebreak

Our main results in this paper are as follows:

\begin{thm1}
The face lattice of $\NTg$ (and of $\PG$) is isomorphic to the lattice of all elementary subgraphs of $\GTg$, ordered by inclusion.
\end{thm1}

\begin{thm2}
For any diagonal $\cc$, the polytope $\NTg$ can be found directly from $T$ as follows:\\
Affine hull equations:
\begin{align*}
{\rm(i)}& \quad \text{For each edge $e$ of } T \backslash T'  \text{, write } x_e = 0.\\
{\rm(ii)}& \quad \text{For each vertex $w \in T'$, write } \sum_{e \ni w} x_e = 1 \text{ if } w \in \cc, \text{ or write } \sum_{e \ni w} x_e =0 \text{ if } w \notin \cc.
\end{align*}
Facet-defining inequalities:
\begin{align*}
{\rm(iii)}& \quad \text{For every boundary segment $e \in T'$ not incident to } \cc \text{, write }  x_e \geq 0.\\
{\rm(iv)}& \quad \text{For every pair of boundary segments $\{b,c\}$ of $T'$ that are opposite sides}\\
& \quad \text{of $Q_{\tau_a}$, where $a \in \Dg$ is a balanced diagonal, let the other pair of opposite sides }\\
& \quad \text{of $Q_{\tau_a}$ be $\{e,f\}.$ Exactly one of these three cases will hold for each pair $\{e,f\}$:}\\
& \qquad \text{- If } \{e,f\} \subset \{\tau_{i_2},\ldots,\tau_{i_{d-1}}\} \text{, write the inequality } x_a+x_b+x_c \leq 1.\\  
& \qquad \text{- If one of } \{e,f\} \text{ (say $e$) is a boundary segment of } T', \text{ write } x_e \geq 0.\\ 
& \qquad \text{- Otherwise, write } x_e \geq -1, \text{ where $e$ is diagonal } \tau_{i_1} \text { or } \tau_{i_d}.
\end{align*}
\end{thm2}

\section{Other Results and All Proofs} \label{Proofs}

Before we can prove our main results, we need to understand how features of $T'$ correspond to features of $\GTg$. 

\begin{lemma} \label{SaFeatVsGaFeat} {\rm
Let the points of intersection of $\cc$ and $T'$, in order, be $p_0, \ldots, p_{d+1}$, and let $\tau_{i_1}, \dots, \tau_{i_d}$ be the diagonals of $T'$ that are crossed by $\cc$, in order.
When constructing $\GTg$ from $T'$,
\begin{itemize}
\item Each balanced diagonal in $\{\tau_{i_2},\ldots,\tau_{i_{d-1}}\}$ becomes 2 identically labeled exterior edges of $\GTg$ that are parallel and are a northwest-southeast knight\textrm's move apart. There is no corner in $\GTg$ at the tile between the two tiles containing these edges.
\item Each imbalanced diagonal in $\{\tau_{i_2},\ldots,\tau_{i_{d-1}}\}$ becomes 2 identically labeled exterior edges of $\GTg$ that are perpendicular and share a vertex. There is a corner in $\GTg$ at the tile between the two tiles containing these edges.
\item Each boundary segment of the polygon that is not incident to $\cc$ becomes a uniquely labeled interior edge of $\GTg$.
\item The pair of boundary segments of the polygon incident to $p_0$ becomes the bottom and left edges [each uniquely labeled] on the first tile of $\GTg$. The pair of boundary segments of the polygon incident to $p_{d+1}$ becomes the top and right edges [each uniquely labeled] on the last tile of $\GTg$.
\item Assume $T'$ is not a triangulated quadrilateral. The diagonal $\tau_{i_1}$ becomes the uniquely labeled edge $x_{i_1}$ on the left exterior edge or bottom exterior edge (whichever exists) of the second tile of $\GTg $. The diagonal $\tau_{i_d}$ becomes the uniquely labeled edge $x_{i_d}$ on the right exterior edge or top exterior edge (whichever exists) of the penultimate tile of $\GTg$. (If $T'$ is a quadrilateral, the lone diagonal is not present in $\GTg$.)
\item There are a total of $d$ boxes in the snake graph, and $d =  |\Dg|$.
\end{itemize}} 
\end{lemma}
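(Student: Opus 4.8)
The plan is to prove Lemma \ref{SaFeatVsGaFeat} by carefully unwinding the recursive gluing construction of $\GTg$ described in Section \ref{ClusterAlgs}, tracking how each edge and vertex of $T'$ is represented in the tiles. The central tool is the observation, already recorded in the paragraph defining $\sim$, that the two triangles of $T'$ sharing a diagonal $\tau_{i_k}$ both become part of $\GTg$, and that a shared tile edge forces identical edge labels on the incident triangles. I would organize the proof item by item, but each item reduces to the same local analysis: fix an interior crossed diagonal $\tau_{i_k}$ (for $2 \le k \le d-1$), consider its quadrilateral $Q_{\tau_{i_k}}$ inside $T'$, and determine how the four sides of $Q_{\tau_{i_k}}$ are distributed among the exterior and interior edges of tiles $\Sbar_{i_{k-1}}$, $\Sbar_{i_k}$, $\Sbar_{i_{k+1}}$.

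For the first two items, the key is the \emph{balanced}/\emph{imbalanced} dichotomy together with the embedding rule that consecutive tiles have opposite orientations and each diagonal runs northwest-to-southeast. I would argue that $\cc$ enters and exits $Q_{\tau_{i_k}}$ through the two sides that are \emph{not} $\tau_{i_{k-1}}$'s and $\tau_{i_{k+1}}$'s companion edges, and that a side of $Q_{\tau_{i_k}}$ appears as an exterior edge of $\GTg$ precisely when it is not itself one of the crossed diagonals (i.e. when it is a boundary segment of $T'$ or a crossed diagonal shared with the neighboring tile). When the diagonal is balanced, its two boundary-segment sides are opposite, so they appear as exterior edges that are parallel and displaced by one knight's move, forcing the glued tile between them to continue straight (no corner); when imbalanced, the two boundary sides are adjacent, producing perpendicular exterior edges meeting at a vertex and a genuine corner. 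Here I would invoke the definition of corner directly: a corner occurs exactly when the gluing direction changes from horizontal to vertical, which is dictated by whether the edge $x_{[\cc_{k-1}]}$ shared with the previous tile is the northern versus eastern edge, and this in turn is determined by the balanced/imbalanced type.

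The remaining items are more bookkeeping than obstacle. The third item (each non-incident boundary segment becomes a uniquely labeled interior edge) follows because such a segment is one of the $\tau_{[\cc_k]}$ edges along which two tiles are glued, hence interior, and uniqueness follows from the converse statement about $\sim$ on edges. The fourth and fifth items are boundary cases handled by the explicit placement rules for the first and last tiles (southwest corner at $p_0$, orientation-preserving embedding of $\Sbar_{i_1}$, the diagonal running northwest-to-southeast on each subsequent tile), and I would simply read off where $\tau_{i_1}$ and $\tau_{i_d}$ land, noting the degenerate quadrilateral case where $d=1$ and no diagonal survives. The final item, $d = |\Dg|$ and $d$ boxes in the snake graph, is immediate from the definition of $\Dg$ and the one-tile-per-crossed-diagonal construction.

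The main obstacle I expect is the first two items, specifically making rigorous the claim about \emph{which} sides of $Q_{\tau_{i_k}}$ become parallel-versus-perpendicular exterior edges and correctly correlating the geometric picture in $T'$ (opposite versus adjacent boundary sides) with the combinatorial picture in $\GTg$ (knight's-move displacement versus shared vertex, and presence versus absence of a corner). The subtlety is that the orientation-reversal between consecutive tiles means one must track carefully whether each crossed diagonal appears as a northern or eastern glued edge, since this alternation is what converts ``opposite sides of a quadrilateral'' into ``parallel exterior edges a knight's move apart.'' I would handle this by setting up an explicit local coordinate convention on a pair of adjacent tiles and checking the two cases (balanced, imbalanced) against the embedding rules, after which the global statement follows by induction on the tiles.
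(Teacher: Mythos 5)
Your overall plan is the same as the paper's proof: unwind the tile-gluing construction item by item, with the balanced/imbalanced dichotomy of $Q_{\tau_{i_k}}$ controlling whether the three relevant tiles lie in a straight line or form an L-shape, and with the remaining bullets read off from which quadrilaterals contain a given edge. However, one step as you state it is backwards, and it sits exactly at the point you identify as the main obstacle. You claim that a side of $Q_{\tau_{i_k}}$ appears as an exterior edge of $\GTg$ ``precisely when it is not itself one of the crossed diagonals.'' The opposite is true for the tile $\Sbar_{i_k}$: its two glued (hence interior) edges are exactly the two sides $\tau_{[\cc_{k-1}]}$ and $\tau_{[\cc_k]}$, which are the boundary segments of $T'$, while the two crossed-diagonal sides $\tau_{i_{k-1}}$ and $\tau_{i_{k+1}}$ are the ones left unglued and hence exterior. (If your claim were right, it would contradict the lemma's own third bullet, which says boundary segments not incident to $\cc$ become interior edges.) The same inversion infects your balanced-case argument: the two opposite boundary-segment sides of a balanced $Q_{\tau_e}$ do not ``appear as exterior edges \dots\ a knight's move apart''; they become the two interior glued edges of the middle tile $\Sbar_{i_k}$, sitting on opposite sides of it. The parallel, knight's-move pair referred to in the lemma is the two copies of the label $e$ itself, which live on the flanking tiles $\Sbar_{i_{k-1}}$ and $\Sbar_{i_{k+1}}$.

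The correct chain of implications, which is what the paper's proof runs through, is: $e=\tau_{i_k}$ balanced $\Leftrightarrow$ the two glued edges of $\Sbar_{i_k}$ are opposite sides of that tile $\Leftrightarrow$ there is no corner at $\Sbar_{i_k}$, so the tiles $\Sbar_{i_{k-1}},\Sbar_{i_k},\Sbar_{i_{k+1}}$ are collinear $\Rightarrow$ the two exterior edges labeled $e$ (one on each flanking tile) are parallel and a northwest-southeast knight's move apart; imbalanced gives adjacent glued edges, a corner, an L-shape, and perpendicular copies of $e$ sharing a vertex. Your later remark --- that a corner is detected by whether the shared edge $x_{[\cc_{k-1}]}$ is the northern versus the eastern edge, and that this is dictated by the balanced/imbalanced type --- is precisely the right criterion for the middle equivalences, so once the exterior/interior roles are straightened out, your local-coordinate check of the two cases plus induction fills in what the paper dismisses with ``following the construction process.'' The rest of your outline (glued edges for bullet three, the placement rules for bullets four and five, the box count for bullet six) matches the paper's proof.
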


The reader is encouraged to observe how Figure \ref{fig:example} illustrates the above lemma. Specifically, the bullet points refer to, respectively, diagonals 4 and 5; diagonal 3; boundary segments 7, 10, 11, and 12; boundary segment pairs $\{1,13\}$ and $\{8,9\}$; and diagonals 2 and 6. Observe where these labels end up on $\GTg.$

\begin{proof}
The proof follows from the construction of the snake graph as a gluing of tiles isotopic to quadrilaterals in the triangulation. The last statement here is clear by construction of $\GTg$. For the first and second statements, observe that a diagonal $e \in \{\tau_{i_2},\ldots,\tau_{i_{d-1}}\}$ is a side of exactly two non-overlapping quadrilaterals in the triangulation, which become exactly two tiles in $\GTg$. There is exactly 1 other tile between these two tiles in $\GTg$, corresponding to $Q_{\tau_e}$. This results in two identical labels of $e$ on exterior edges. If $e$ is a balanced diagonal, then following the construction process shows that the three tiles are in a straight line, and the two labels of $e$ end up on edges that are a northwest-southeast knight's move apart in $\GTg.$ If $e$ is imbalanced, the three tiles form an L-shape, and the two labels of $e$ end up on adjacent perpendicular edges. 

Similarly, a boundary segment $e$ of $T'$ that is not incident to $\cc$ is a side of exactly two quadrilaterals in the triangulation. These two quadrilaterals are ``consecutive" in that they overlap in a triangle, so they become two tiles in $\GTg$ that are glued together. The side the two quadrilaterals share is $e$, so the tiles are glued along $e$, meaning that $e$ becomes an interior edge of $\GTg$. No other edge of $\GTg$ can be labeled $e$ since no other quadrilateral in the triangulation involves $e$.

Now, let $e$ be an edge of $T'$ that is a boundary segment of the polygon incident to $\cc$. Then $e$ is a side of exactly one quadrilateral in the triangulation, so it becomes a side of exactly one tile in $\GTg$. In the ordering of diagonals according to their intersections with $\cc$, the diagonal $e$ is either first or last, so this must be the first or last tile placed. The label $e$ must be unique (because it only appears in one tile), and must appear on an edge of $\GTg$ that is guaranteed to be exterior regardless of the next tile placed, because interior edges of $\GTg$ belong to two adjacent tiles, which is not the case. All of this forces the four boundary edges meeting this criterion to become the left and bottom of the first tile, and the top and right of the last tile.

Finally, if $T'$ is a triangulated quadrilateral, it is obvious that the lone diagonal is not present in $\GTg$. So assume $T'$ is not a quadrilateral, and let $e$ be the diagonal $\tau_{i_1}$ or $\tau_{i_d}$.  Then $e$ is a side of exactly one quadrilateral in the triangulation, so it becomes a side of exactly one tile in $\GTg$. In the ordering of diagonals according to their intersections with $\cc$, the diagonal $e$ is either second or next-to-last, so this must be the second or next-to-last tile placed. The label $e$ must be unique (because it only appears in one tile), and must appear on an exterior edge of $\GTg$ because interior edges of $\GTg$ belong to two adjacent tiles, which is not the case. The correspondence between vertices of $T'$ and $\GTg$ forces the specific placement described.

\end{proof}

\begin{remark} \label{1or2edgesSameLabel}
Note that in $\GTg$, any number of vertices can be in an equivalence class, but at most two edges have the same label (because any edge in the triangulation is an edge of either 1 or 2 quadrilaterals, and $\cc$ cannot cross the same diagonal more than once).
\end{remark} 

We now state a classical result on bipartite graphs and a related polytope.

\begin{definition}
The \emph{perfect matching polytope} $PM(G)$ of a graph $G$ (with unique edge labels) is the convex hull of the incidence vectors of perfect matchings of $G$. 
\end{definition}

\begin{lemma} \label{LP734}
(\cite{E}, also \cite{LP}, Theorems 7.3.4, 7.6.2): If $G$ is a bipartite graph, then $PM(G)$ is given by the following equations and inequalities:
\begin{align*}
{\rm(i')}\quad &x_e \geq 0 \text{ for each edge } e \text{ of } G \\
{\rm(ii')}\quad &\sum_{e \ni v} x_e = 1 \text{ for each vertex $v$ of } G.
\end{align*}
The dimension of $PM(G)$ is $|E(G)| - |V(G)| + 1.$
\end{lemma}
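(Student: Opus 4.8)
The plan is to prove the two set inclusions $PM(G) \subseteq Q$ and $Q \subseteq PM(G)$, where $Q$ denotes the polytope cut out by the inequalities (i$'$) and equations (ii$'$), and then to compute the dimension separately. The inclusion $PM(G) \subseteq Q$ is immediate: the incidence vector of any perfect matching has all entries in $\{0,1\}$, so it satisfies (i$'$), and it covers each vertex exactly once, so it satisfies (ii$'$); since $Q$ is convex and contains every such vector, it contains their convex hull $PM(G)$.

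The substance is the reverse inclusion, for which it suffices to show that every \emph{vertex} of $Q$ is the incidence vector of a perfect matching. Equivalently, I would show that every vertex of $Q$ is integral, since an integral point satisfying (i$'$) and (ii$'$) has, at each vertex $v$, nonnegative integer edge-values summing to $1$, forcing each value into $\{0,1\}$ and the chosen edges to form a perfect matching. I would prove integrality by the standard alternating-cycle argument, which is exactly where bipartiteness enters. Given $x \in Q$ that is not integral, consider the subgraph $F$ of edges $e$ with $0 < x_e < 1$. Each vertex meeting $F$ must be incident to at least two edges of $F$ (a single fractional edge at $v$ would leave the remaining values, which sum to a non-integer, unable to all be integral), so $F$ has minimum degree at least $2$ and therefore contains a cycle; because $G$ is bipartite this cycle has even length. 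Alternately adding $+\epsilon$ and $-\epsilon$ to the edges around the cycle produces two points $x^{+}, x^{-} \in Q$ with $x = \tfrac12(x^{+} + x^{-})$, so $x$ is not a vertex. Hence every vertex of $Q$ is integral, giving $Q \subseteq PM(G)$ and thus equality. I would remark that an equally clean route is to invoke total unimodularity of the bipartite vertex--edge incidence matrix, verified via the Ghouila--Houri row-partition criterion using the two color classes, which makes all vertices of $\{x \ge 0 : Mx = \mathbf{1}\}$ integral.

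For the dimension, the affine hull of $PM(G)$ is the solution set of the equality system (ii$'$), whose coefficient matrix is the vertex--edge incidence matrix $M$ of $G$; hence $\dim PM(G) = |E(G)| - \rank(M)$. For a connected bipartite graph the rank of $M$ is $|V(G)| - 1$: the only linear dependency among the rows is that the sum of the equations over one color class equals the sum over the other (each edge is counted once on each side), and one checks there are no further dependencies. This yields $\dim PM(G) = |E(G)| - |V(G)| + 1$.

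The main obstacle is the integrality of the vertices of $Q$; this is precisely the step that fails for general (non-bipartite) graphs, where odd cycles in $F$ obstruct the $\pm\epsilon$ decomposition and Edmonds' odd-set inequalities must be added. A secondary point requiring care in the dimension count is that the equalities (ii$'$) genuinely cut out the affine hull, which uses that $G$ is connected and that every edge lies in some perfect matching; otherwise some inequality $x_e \ge 0$ would be implicitly tight and would lower the dimension.
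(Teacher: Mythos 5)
The paper does not prove this lemma at all: it is quoted as a classical result, with citations to \cite{E} and \cite{LP}, so there is no internal proof to compare against; what you have written is a proof of the cited theorem itself. Your argument for the inequality description is the standard one and is correct: $PM(G)\subseteq Q$ follows from convexity, and for $Q\subseteq PM(G)$ your alternating-cycle perturbation shows that every vertex of $Q$ is integral (the even-cycle step is exactly where bipartiteness enters), an integral point of $Q$ is visibly the incidence vector of a perfect matching, and $Q$, being bounded, is the convex hull of its vertices. The total-unimodularity remark is an equally valid alternative route. Note also that this half of the lemma is true for an arbitrary bipartite $G$ (if $G$ has no perfect matching, both sides are empty), so no hypotheses are missing there.

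The dimension statement is where your write-up has a genuine gap. You are right that the formula $\dim PM(G)=|E(G)|-|V(G)|+1$ requires $G$ to be connected and matching-covered --- hypotheses that the paper's statement of Lemma \ref{LP734} omits, although they do hold where the lemma is applied, since the relabeled snake graph $G$ in the proof of Proposition \ref{12affHull} is connected and elementary. But under those hypotheses you only \emph{assert} that the affine hull of $PM(G)$ is the solution set of (ii$'$). Your rank computation shows $\dim\{x : Mx=\mathbf{1}\}=|E(G)|-|V(G)|+1$, and since $\operatorname{aff} PM(G)\subseteq\{x: Mx=\mathbf{1}\}$ this only bounds $\dim PM(G)$ from \emph{above}. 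The nontrivial half is the lower bound: one must produce $|E(G)|-|V(G)|+2$ affinely independent perfect-matching vectors, equivalently show that the differences $\chi_{M_1}-\chi_{M_2}$ of perfect matchings span the entire cycle space of $G$ (the kernel of $M$). Your parenthetical remark --- that without matching-coveredness some inequality $x_e\ge 0$ would be implicitly tight --- explains why the hypotheses are \emph{necessary}, not why they are \emph{sufficient}. The standard way to close this is an induction on a bipartite ear decomposition of the matching-covered graph, each ear raising both $|E|-|V|+1$ and the dimension by exactly one; this is precisely the content of the Lov\'asz--Plummer theorem the paper cites, and it needs to be supplied (or cited) for your proof of the dimension claim to be complete.
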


We will use the above lemmas to prove our first proposition:

\begin{proposition} \label{PGaAffHull}
For any diagonal $\cc$, the affine hull of the polytope $\PG$ can be found from the snake graph $\GTg$ by writing the following equations: 
\begin{align*}
{\rm(i)}\quad &x_e = 0 \text{ for each edge } e \in T \text{ that does not appear in } \GTg \\
{\rm(ii)}\quad &\sum_{e \in E_{[v]}} x_e = |[v]| \text{ for each vertex equivalence class $[v]$ of $\GTg$}
\end{align*}
Using the triangulation $T$ directly, the equivalent equations are
\begin{align*}
{\rm(iii)}\quad &x_e = 0 \text{ for each edge } e \text{ of } T \backslash T' \\
{\rm(iv)}\quad & \sum_{e \ni w} x_e = |diagonals(w)| \text{ for each vertex } w \text{ of } T'
\end{align*}
The equations defining the affine hull here (either {\rm(i)-(ii)} or {\rm(iii)-(iv)}) are linearly independent, and thus are a minimal description of the affine hull. There are $2n+3 - |\Dg|$ equations in this minimal description.
\end{proposition}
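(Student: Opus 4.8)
The plan is to establish the two assertions---linear independence and the equation count---separately, and I will work with the triangulation description (iii)--(iv); since the proposition already asserts that (i)--(ii) and (iii)--(iv) are equivalent and both cut out the affine hull, verifying the claim for one formulation suffices. The key preliminary observation is that the two blocks of equations involve \emph{disjoint} sets of coordinates: equation (iii) only constrains the coordinates $x_e$ with $e \in E(T)\setminus E(T')$, whereas each equation (iv) only involves coordinates $x_e$ with $e \in E(T')$ (the edges of $T'$ incident to $w$). Hence the full system is linearly independent if and only if each block is independent on its own, and block (iii), being a collection of distinct coordinate equations $x_e = 0$, is trivially independent.

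The content lies in block (iv), which I would recognize as the set of rows of the unsigned vertex--edge incidence matrix of the graph $T'$, and prove independent by the standard odd-cycle argument. Suppose a combination $\sum_w \lambda_w \bigl( \sum_{e \ni w} x_e \bigr)$ vanishes as a linear functional. Reading off the coefficient of $x_e$ for an edge $e = \{u,w\}$ of $T'$ gives $\lambda_u + \lambda_w = 0$, so the scalars $\lambda$ alternate in sign along every edge. Since $T'$ is connected, every $\lambda_w$ is then determined by a single value together with the parity of the distance from $w$ to a fixed base vertex; traversing any triangle of $T'$---an odd cycle, which is present because each of the $d+1$ triangles that $\cc$ passes through lies in $T'$---forces $\lambda = -\lambda$, so all $\lambda_w = 0$. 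This proves block (iv), and hence the whole system, is linearly independent.

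For the count I would use that $T'$ is a triangulation of a $(d+3)$-gon: it is the union of the $d+1$ triangles containing the successive segments $\cc_0, \ldots, \cc_d$ of $\cc$, glued along shared edges into a disk, so it is a triangulated polygon with $d+1$ triangles, giving $|V(T')| = d+3$ and $|E(T')| = 2d+3$ (equivalently $|E(T')| - |V(T')| = (\#\text{triangles}) - 1 = d$ by Euler's formula for a disk). There are therefore $|E(T)| - |E(T')| = (2n+3) - (2d+3) = 2(n-d)$ equations of type (iii) and $|V(T')| = d+3$ of type (iv), for a total of $2n+3-d = 2n+3-|\Dg|$. Minimality is then automatic: a linearly independent system that cuts out the affine hull has cardinality equal to its codimension, so no smaller collection of equations can describe it.

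The step I expect to demand the most care is the structural claim that $T'$, defined through its vertex set of endpoints of diagonals in $\cc \cup \Dg$, is exactly the full triangulated $(d+3)$-gon---in particular that this vertex set omits no vertex of the strip, so that $|V(T')| = d+3$ and the incidence-matrix picture is the correct one. This requires checking that any ear vertex $v$ not incident to a crossed diagonal must be an endpoint of $\cc$ itself: the triangulation edge cutting off the ear at $v$ is one of the $\tau_{i_k}$ and is therefore crossed by $\cc$, so $\cc$ enters the ear triangle and, being unable to cross a boundary edge, must terminate at $v$. Once this is secured, the independence and counting steps above are routine.
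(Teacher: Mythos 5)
The central claim of this proposition is that the listed equations actually describe the affine hull of $\PG$, and your proposal never proves it: you set it aside at the outset with ``since the proposition already asserts that (i)--(ii) and (iii)--(iv) are equivalent and both cut out the affine hull, verifying the claim for one formulation suffices.'' That is circular --- equivalence of the two systems and the fact that they cut out the affine hull are conclusions of the proposition, not hypotheses you may invoke. A complete proof must establish (a) validity: the characteristic vector of every perfect matching of $\GTg$ satisfies the equations; and (b) sufficiency: no further independent equation holds on $\PG$, equivalently $\dim \PG = |\Dg|$. Item (b) is where the real work lies. In the paper it occupies Proposition \ref{12affHull}, which relabels $\GTg$ to get a bipartite graph $G$ with distinct edge labels, invokes the Edmonds/Lov\'asz--Plummer description of $PM(G)$ (Lemma \ref{LP734}) to obtain $\dim PM(G) = |E(G)|-|V(G)|+1 = |\Dg|$, and then shows that the projection $\pi$ identifying equally-labeled edges preserves this dimension, by exhibiting explicit bases of the translated affine hull $Q$ and of $\ker \pi$ and checking they are jointly independent. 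Nothing in your proposal plays the role of this dimension argument. Likewise, the equivalence of the snake-graph description (i)--(ii) with the triangulation description (iii)--(iv) --- the paper's Lemma \ref{12equiv34}, a case analysis matching vertex equivalence classes of $\GTg$ with vertices of $T'$ --- is assumed rather than proved.

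What you do prove is correct, and it corresponds to the easiest of the paper's three ingredients, Lemma \ref{34indep}: linear independence of (iii)--(iv) and the count $2n+3-|\Dg|$. Your route there is in fact cleaner than the paper's: viewing (iv) as the rows of the unsigned vertex-edge incidence matrix of $T'$ and using connectivity together with the presence of triangles (odd cycles) is tidier than the paper's coefficient-propagation around the polygon, and your identification of $T'$ with the triangulated $(d+3)$-gon swept out by $\cc$ (including the ear-vertex check) yields the count directly. But linear independence only bounds the codimension of the solution set of the system; the missing content is that the affine span of the matching vectors fills out that entire solution set. As it stands, your proposal proves roughly a third of the proposition and omits its core.
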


\begin{proof}
Proposition \ref{PGaAffHull} will follow immediately from the three statements that follow (Lemma \ref{34indep}, Lemma \ref{12equiv34}, and Proposition \ref{12affHull}).
\end{proof}

\begin{lemma} \label{34indep}
The set of equations {\rm(iii)-(iv)} consists of $2n+3 - |\Dg|$ linearly independent equations.
\end{lemma}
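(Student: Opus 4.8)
The plan is to prove the statement by writing down the coefficient matrix of the system (iii)--(iv) explicitly, observing that it is block-triangular, and computing its rank via the classical rank formula for vertex--edge incidence matrices. The count $2n+3-|\Dg|$ and the independence will then both fall out of this single computation.

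\emph{Counting.} First I would record the combinatorial structure of $T'$. Since $\cc$ crosses the $d=|\Dg|$ diagonals $\tau_{i_1},\dots,\tau_{i_d}$, it passes through exactly $d+1$ triangles of $T$, and $T'$ is precisely the union of these triangles: a triangulated polygon (topological disk) whose $d+1$ bounded faces are triangles, whose interior edges are the crossed diagonals, and all of whose vertices lie on its boundary. This is the geometric content already implicit in Lemma \ref{SaFeatVsGaFeat}. Euler's formula for this disk, $|V(T')|-|E(T')|+|F|=2$ with $|F|=(d+1)+1$, gives $|E(T')|-|V(T')|=d=|\Dg|$. The number of equations is then $|E(T)\setminus E(T')|$ from (iii) plus $|V(T')|$ from (iv), which equals $(2n+3)-|E(T')|+|V(T')|=2n+3-|\Dg|$, as claimed.

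\emph{Independence.} For linear independence I would put the coefficient matrix $M$ of (iii)--(iv) (columns indexed by all $2n+3$ edges of $T$) into block-triangular form. Ordering the columns so that edges of $T'$ come first and edges of $T\setminus T'$ come last, each equation $x_e=0$ from (iii) contributes one row of an identity block in the $T\setminus T'$ columns and zeros in the $T'$ columns, while each equation from (iv) contributes, in the $T'$ columns, the row of $w$ in the (unsigned) vertex--edge incidence matrix $B$ of the graph $T'$, together with some entries $C$ in the $T\setminus T'$ columns from edges of $T\setminus T'$ incident to $w$. Thus $M=\left(\begin{smallmatrix} B & C \\ 0 & I \end{smallmatrix}\right)$, and using the identity block to clear $C$ by row operations yields $\rank M=\rank B+|E(T)\setminus E(T')|$. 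Now I invoke the classical fact that the unsigned incidence matrix of a connected graph has rank $|V|$ if the graph is non-bipartite and $|V|-1$ if it is bipartite; since $T'$ is connected and contains a triangle (indeed $d+1$ of them), it is non-bipartite, so $\rank B=|V(T')|$. Hence $\rank M=|V(T')|+|E(T)\setminus E(T')|=2n+3-|\Dg|$, matching the number of rows, so the rows are independent (and since the affine hull is nonempty the system is consistent, so independence of the coefficient rows is exactly independence of the equations). Notice this argument gives independence regardless of the precise value of $|E(T')|-|V(T')|$; only the \emph{count} uses $|E(T')|-|V(T')|=d$.

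\emph{Main obstacle.} The one genuinely geometric point, as opposed to routine bookkeeping, is justifying that $T'$ really is a triangulated $(d+3)$-gon with exactly $d+1$ triangular faces, i.e. that no edge of $T$ lying outside the snake region joins two vertices of $T'$. I would establish this from the convex position of the polygon's vertices: an edge $uw\in T$ with $u,w\in V(T')$ either crosses $\cc$, and is then one of the $\tau_{i_k}$, or lies on one side of $\cc$, in which case a short argument using that all vertices of $T'$ lie on the boundary of the snake region and are in convex position forces $uw$ to be a boundary edge of that region. Once this is in hand, the Euler count and the standard incidence-matrix rank formula finish the proof.
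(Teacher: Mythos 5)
Your proof is correct, but it reaches the conclusion by a genuinely different route from the paper's. The paper never assembles a coefficient matrix: for the equations (iv) it runs a bare-hands propagation argument --- choose an ear of $T'$ (a diagonal cutting off a triangle with two boundary segments), observe that each edge label of $T'$ appears in exactly the two sums of its endpoints, deduce $c_A=c_B=c_C=0$ from the three pairwise equations of that ear, and then peel off one triangle at a time, each new vertex's coefficient being forced to vanish by an edge back to an already-handled vertex; the equations (iii) are then independent because they involve disjoint variables, and the count $2n+3-|\Dg|$ comes from partitioning the edges of $T$ into edges of $T\setminus T'$, boundary segments of $T'$ (equinumerous with vertices of $T'$), and $\Dg$. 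Your block-triangular reduction $\rank M=\rank B+|E(T)\setminus E(T')|$ together with the classical fact that the unsigned incidence matrix of a connected non-bipartite graph has rank $|V|$ packages the same content more abstractly: the paper's propagation is in effect an ad hoc proof of exactly that incidence-rank fact, with the ear supplying the odd cycle. Your version buys robustness --- it needs only that $T'$ is connected and contains a triangle, not the path structure of the crossed triangles, so it is better positioned for the generalizations contemplated in Section \ref{Other}; and clearing the $C$-block makes the argument insensitive to whether the sums in (iv) are read over all edges of $T$ incident to $w$ or only over edges of $T'$ (the paper's worked example uses the latter convention, i.e.\ $C=0$). The paper's version buys self-containedness. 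Your Euler-formula count and the paper's partition count are equivalent, and both rest on the point you rightly isolate as the one genuine geometric input: that $T'$ is exactly the triangulated $(d+3)$-gon formed by the $d+1$ triangles crossed by $\cc$, with no stray edge of $T$ joining two of its vertices. The paper assumes this silently (its ``without loss of generality, we can label $T'$ as in this figure''), so your explicit non-crossing argument --- any such edge would lie in the closure of a complementary component, which meets $V(T')$ only in the two endpoints of its attaching diagonal --- fills a step the paper glosses over rather than constituting a detour.
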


\begin{proof}
Suppose some linear combination of the sums on the left-hand sides of equations (iv) equals zero:
\begin{equation} \label{linComb}
\sum_{w \in T'} \left( c_w \sum_{e \ni w} x_e \right) = 0 
\end{equation}
Here, the outer sum runs over each vertex $w$ of $T'$. Choose a diagonal that forms a triangle with two boundary segments of $T'$, and label it 1. Without loss of generality, we can label $T'$ as in this figure:

\begin{figure}[h!]
\centering
\includegraphics[scale=0.36]{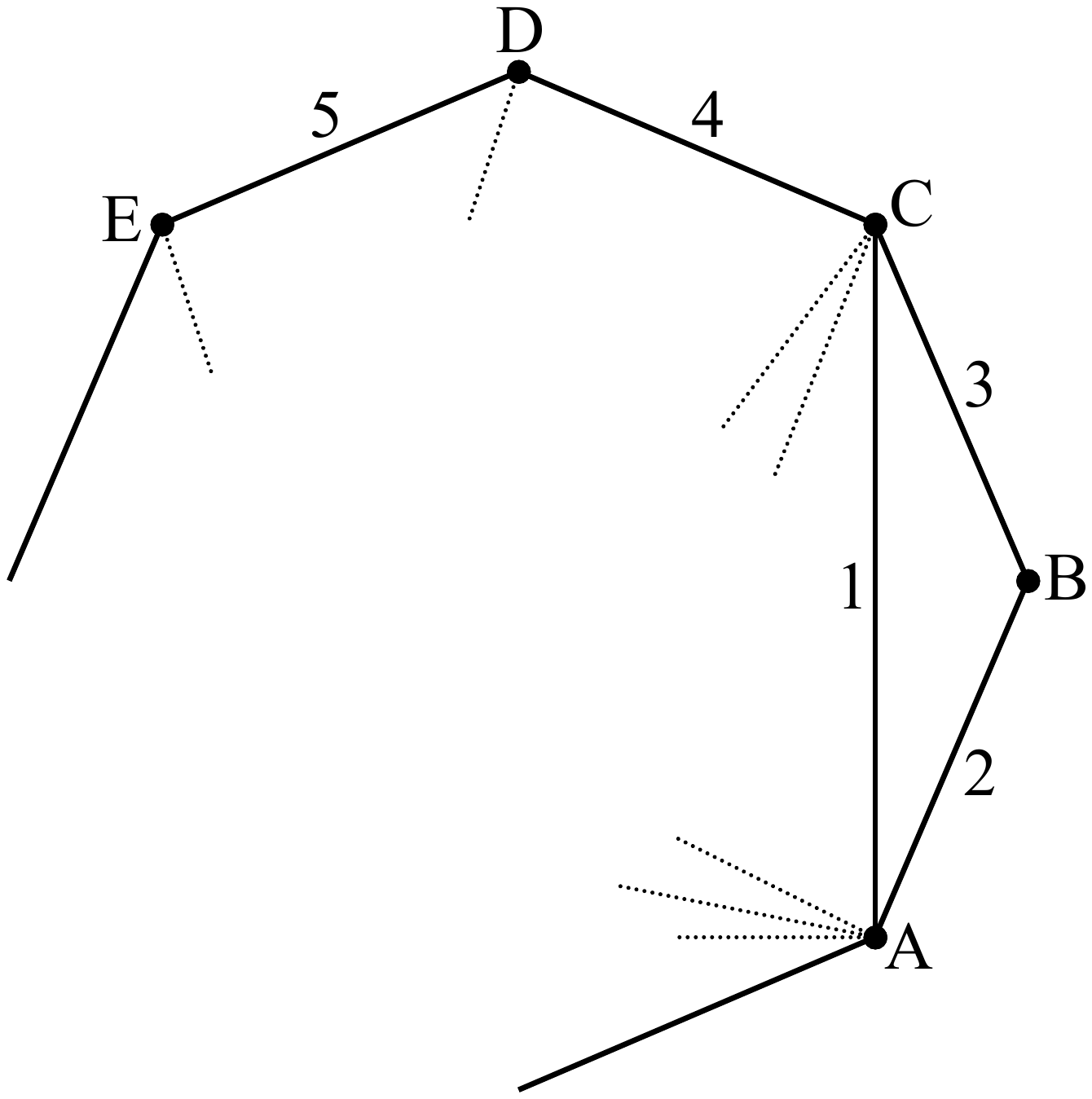}
\label{fig:linindepfig}
\end{figure}

Note that each edge label in $T'$ appears in exactly two sums in Proposition \ref{PGaAffHull}(iv) (the equations corresponding to the two endpoints of that edge). We can thus collect the terms of equation (\ref{linComb}) so that the coefficient of $x_{edge(w_1,w_2)}$ is $c_{w_1} + c_{w_2}$ (see above figure):
\begin{equation} \label{collectedLinComb}
(c_A + c_C)x_1 + (c_A + c_B)x_2 + (c_B + c_C)x_3 + ... = 0. 
\end{equation}

Since all coefficients in equation (\ref{collectedLinComb}) must be zero, we can reason as follows. First, since $c_A + c_C = c_A + c_B = c_B + c_C = 0,$ we can conclude that $c_A = c_B = c_C = 0$. Now, the next term in equation (\ref{collectedLinComb}) is $(c_C + c_D)x_4$, so $(c_C + c_D)$ must be zero. But we know $c_C = 0$, so this forces $c_D = 0$. This knowledge in turn forces $c_E = 0$, and so on until all $c_w$ are zero. Thus the equations (iv) are linearly independent. 

Clearly, the equations (iii) are linearly independent from each other, and also are linearly independent from those in (iv) because they involve different variables. So indeed (iii) and (iv) together form a linearly independent set.

Next we will prove that there are $2n+3 - |\Dg|$ equations in this description (iii)-(iv). Observe that the set of edges of the triangulation $T$ can be partitioned into three subsets:
\begin{align*}
\text{edges of } T &= \text{edges of } T \backslash T' \; \coprod \; \text{ boundary segments of } T' \; \coprod \; \Dg. \\
\text{ Taking } & \text{cardinalities, this equation becomes} \notag \\
2n+3 &= \text{\# of edges of } T \backslash T' +  \text{\# of vertices in } T' + |\Dg|. \text{ This is} \notag \\
2n+3 &= \text{\# of equations in (iii)} +  \text{\# of equations in (iv)} + |\Dg| \text{, as desired.} \notag
\end{align*}
\end{proof}

\begin{lemma} \label{12equiv34}
The set of equations {\rm(i)-(ii)} is equivalent to the set of equations {\rm(iii)-(iv)}.
\end{lemma}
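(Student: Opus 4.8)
The plan is to show the two systems cut out the same affine subspace of $\R^{2n+3}$ by establishing a dictionary between the combinatorial data of $\GTg$ (vertex equivalence classes, edge labels) and the geometric data of $T'$ (vertices, edges). The statement being proved, Lemma \ref{12equiv34}, asserts that equations (i)-(ii) and (iii)-(iv) are equivalent, and the natural route is to match them up essentially one-for-one using the folding correspondence already described in the excerpt.

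First I would handle the ``easy'' pair, (i) and (iii). Equation (i) sets $x_e = 0$ for each edge $e \in T$ not appearing in $\GTg$, while (iii) sets $x_e = 0$ for each edge of $T \backslash T'$. So I must argue that an edge of $T$ appears in $\GTg$ if and only if it lies in $T'$. The forward direction is immediate from how $\GTg$ is built: every tile is isotopic to a quadrilateral $Q_{\tau_{i_k}}$ whose edges are diagonals or boundary segments connecting vertices incident to $\cc \cup \Dg$, hence edges of $T'$. For the converse, every edge of $T'$ is by definition incident to the relevant vertices, and Lemma \ref{SaFeatVsGaFeat} catalogues exactly how each diagonal and boundary segment of $T'$ reappears as an edge of $\GTg$; thus the edges of $T$ appearing in $\GTg$ are precisely the edges of $T'$, making (i) and (iii) identical sets of equations.

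Next I would match (ii) with (iv). The key is the equivalence relation $\sim$: a vertex equivalence class $[v]$ of $\GTg$ corresponds to a single marked point $w \in T'$, so the classes $[v]$ are in bijection with the vertices $w$ of $T'$, giving a bijection between equations (ii) and (iv). For a fixed class $[v] \leftrightarrow w$, I must verify two numerical facts. On the left-hand side, $E_{[v]}$ is the set of distinct edge labels incident to any vertex in the class; by the edge-equivalence statement in the excerpt, labels of edges at a vertex of $T'$ correspond exactly to labels appearing at the vertices of the matching class in $\GTg$, so $\sum_{e \in E_{[v]}} x_e = \sum_{e \ni w} x_e$ as linear forms. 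On the right-hand side, I must show $|[v]| = |diagonals(w)|$, i.e. the number of vertices in the equivalence class equals the number of diagonals of $\cc \cup \Dg$ through $w$. This is the crux, and I expect it to be the main obstacle. The idea is that a marked point $w$ gets copied into a new tile-vertex each time a triangle incident to $w$ is traversed, and the number of such copies is governed by how many times $\cc$ enters the fan of triangles around $w$, which is controlled by the diagonals of $\Dg$ (together with $\cc$ itself) emanating from $w$; I would make this precise by tracing the gluing construction and counting, arguing that each diagonal in $diagonals(w)$ contributes exactly one new copy of $w$ to the class $[v]$.

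Finally, having established that (i) and (iii) are literally the same equations, and that (ii) and (iv) agree term-by-term under the bijection $[v] \leftrightarrow w$ (same linear forms on the left, equal constants on the right), I would conclude that the two systems are identical and hence equivalent, completing the proof. I would also note that this term-by-term identification is stronger than mere equivalence of solution sets, which makes the linear-independence count in Lemma \ref{34indep} directly transferable to system (i)-(ii).
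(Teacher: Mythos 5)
Your argument for the generic case is essentially the paper's: match (i) with (iii) via ``edge of $T$ appears in $\GTg$ iff it lies in $T'$,'' and match (ii) with (iv) via the bijection $[v] \leftrightarrow w$, checking that the incident edge labels agree and that $|[v]| = |diagonals(w)|$ (which the paper verifies by splitting into the cases $w \in \cc$ and $w \notin \cc$). However, there is a genuine gap: every one of your key claims fails when $|\Dg| = 1$, i.e.\ when $T'$ is a triangulated quadrilateral. In that case the lone crossed diagonal $\tau_{i_1}$ is an edge of $T'$ but is \emph{not} present in $\GTg$ --- it is deleted as the diagonal of the single tile (Lemma \ref{SaFeatVsGaFeat} states this exception explicitly, so the very lemma you cite for your converse direction contradicts it). Consequently (i) contains the equation $x_{\tau_{i_1}} = 0$ while (iii) is empty, and the sums in (iv) at the two vertices of the quadrilateral joined by $\tau_{i_1}$ contain the term $x_{\tau_{i_1}}$ while the corresponding sums in (ii) do not. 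So the two systems are \emph{not} identical term-by-term; they are merely equivalent, and establishing that requires actual linear algebra. The paper does this by writing out all the equations for the quadrilateral and observing, e.g., that adding the two (iv)-equations containing $x_{\tau_{i_1}}$ and subtracting the other two forces $2x_{\tau_{i_1}} = 0$, after which (ii) follows.

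This also invalidates your closing remark: the assertion that the identification is ``stronger than mere equivalence of solution sets'' is exactly what is false in the $|\Dg|=1$ case, so any downstream use of term-by-term identity (such as transferring the independence count of Lemma \ref{34indep} verbatim) must instead go through equivalence of the solution sets, which is all the lemma claims and all that is needed. Your proof becomes complete once you separate the case $|\Dg| \geq 2$ (where your dictionary is correct --- note that for $|\Dg| \geq 2$ each crossed diagonal is a \emph{side} of some adjacent quadrilateral $Q_{\tau_{i_{k\pm 1}}}$ and hence does appear in $\GTg$) from the case $|\Dg| = 1$, which needs the separate explicit computation.
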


\begin{proof}
We will consider the cases $|\Dg| \geq 2$ and $|\Dg| = 1$ separately.

Suppose first that $|\Dg| \geq 2$. Note that $\GTg$ is constructed from $T'$, so any edge $e$ that is not in $T'$ does not appear in $\GTg$. Conversely, since $|\Dg| \geq 2$, every edge that is in $T'$ is the side of a quadrilateral in $T'$, so it appears in $\GTg$. Thus (i) is equivalent to (iii).

We will show that by the construction of the snake graph as a gluing of tiles isotopic to quadrilaterals in the triangulation, (ii) is equivalent to (iv). Comparing the two statements, we need to show that each vertex equivalence class $[v]$ of $\GTg$ corresponds to a vertex $w$ of $T'$, that the labels of edges incident to $[v]$ in $\GTg$ are the same as the labels of edges incident to the corresponding $w$ in $T'$, and that $|[v]| = |diagonals(w)|$. There are 2 cases to consider.

Case 1: Vertex $w \in T'$ is incident to $\cc$. Then it is not incident to any diagonals in $T'$, so $diagonals(w) = \{ \cc \}$. In this case, $w$ is a vertex of exactly 1 quadrilateral in $T'$, which becomes exactly 1 tile in $\GTg$, and so there is exactly 1 vertex $v$ in the snake graph $\GTg$ corresponding to $w$. The labels of the two edges incident to $w$ in $T'$ clearly become the labels of the two edges incident to $v$ in the tile that becomes embedded in $\GTg$. Since $w$ is not part of any other quadrilateral in $T'$, the corresponding vertex $v \in \GTg$ only appears in this tile in the snake graph, and the next tile is glued onto it along an edge that does not include $v$. Thus no more edges can become incident to $v \in \GTg$ when this next tile is glued on. So the labels of edges incident to $v \in \GTg$ are precisely the same as the labels of edges incident to the corresponding 
$w \in T'$, as desired. Also, in this case $|[v]| = 1 = |\{ \cc \}| = |diagonals(w)|$.

Case 2: Vertex $w \in T'$ is not incident to $\cc$. Then, since $|\Dg| \geq 2, w$ is a vertex of at least 2 quadrilaterals in $T'$. Also, $|diagonals(w)| = $ number of diagonals in $\Dg$ that are incident to $w$. Each of these diagonals is the diagonal of a unique quadrilateral in $T'$. After deletion of the diagonal, each of these quadrilaterals becomes a tile in the snake graph $\GTg$, and since $w$ cannot be on the shared (i.e. glued) edge of any two of these quadrilaterals, the vertex corresponding to $w$ in one tile does not coincide with the vertex corresponding to $w$ in another tile when the tiles are glued together to form the snake graph. Since all the vertices corresponding to $w$ remain distinct when $\GTg$ is formed, there are exactly $|diagonals(w)|$ vertices in $\GTg$ that correspond to $w$. In this way, $w$ corresponds to an entire equivalence class $[v]$ of vertices of $\GTg$, and this equivalence class has cardinality $|diagonals(w)|$, as desired. Let $m$ be the label of an edge in $T'$ incident to $w$. There are at least 2 quadrilaterals in $T'$ incident to $w$, so $m$ must be the side of some quadrilateral, hence it becomes an edge in $\GTg$ that is incident to some vertex in $\GTg$ that corresponds to $w$. Conversely, if $m$ is the label of an edge in $\GTg$ incident to a vertex of $\GTg$ that corresponds to $w$, then it is the side of a quadrilateral in $T'$ with a vertex at $w$, hence it is the label of an edge in $T'$ incident to $w$. So indeed the labels of edges incident to $[v]$ in $\GTg$ are the same as the labels of edges incident to the corresponding $w$ in $T'$.

We have considered both cases, so (ii) is indeed equivalent to (iv). Thus (i)-(ii) is equivalent to (iii)-(iv) for the case $|\Dg| \geq 2$. 

If $|\Dg| = 1$, then $T'$ is a triangulated quadrilateral. 
We can label the lone crossed diagonal 1, and the four sides 2, 3, 4, and 5, counterclockwise, such that the vertices are incident to edges $\{1,2,5\}$, $\{2,3\}$, $\{1,3,4\}$, and $\{4,5\}$. 
Then the equations given by (i)-(iv) are as follows:\\
(i) $x_1 = 0$ \\ 
(ii) $x_2+x_5=1; x_2+x_3 = 1; x_3+x_4 = 1; x_4+x_5 = 1$ \\
(iii) [none] \\
(iv) $x_1+x_2+x_5=1; x_2+x_3 = 1; x_1+x_3+x_4 = 1; x_4+x_5 = 1$\\
Here, the set of equations (i)-(ii) clearly implies (iii)-(iv), and in fact (iii)-(iv) implies (i)-(ii) as well (alternately add and subtract the equations in (iv) for example). Thus the equations (i)-(ii) is equivalent to the set of equations (iii)-(iv).
\end{proof}

\begin{proposition} \label{12affHull}
The set of equations {\rm(i)-(ii)} describes the affine hull of $\PG.$ 
\end{proposition}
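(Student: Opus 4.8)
The plan is to pin down $\text{aff}(\PG)$ by sandwiching: I will show that $\PG$ is contained in the affine subspace $A \subseteq \R^{2n+3}$ cut out by equations (i)-(ii), and that $\dim \PG = \dim A$; since $\text{aff}(\PG) \subseteq A$, equal dimensions force $\text{aff}(\PG) = A$, which is the assertion. The dimension of $A$ is already in hand: by Lemma \ref{12equiv34} the system (i)-(ii) has the same solution set as (iii)-(iv), and by Lemma \ref{34indep} the latter is a linearly independent system of $2n+3-|\Dg|$ equations, so $\dim A = |\Dg| = d$. For the containment $\PG \subseteq A$ it suffices to check that the characteristic vector of every perfect matching $M$ of $\GTg$ satisfies (i) and (ii). Equation (i) is immediate: an edge $e$ of $T$ not appearing in $\GTg$ is never matched, so its coordinate is $0$. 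For (ii), I will use that every edge carrying a label $e \in E_{[v]}$ is incident to the class $[v]$ (by the endpoint-matching property of equally labeled edges recorded just before Remark \ref{1or2edgesSameLabel}) and that no edge joins two vertices of $[v]$; hence $\sum_{e \in E_{[v]}} x_e$ counts exactly the matched edges meeting $[v]$, which equals $|[v]|$ because $M$ covers each of the $|[v]|$ vertices of the class exactly once.

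It remains to prove $\dim \PG \geq d$, which is the heart of the matter. Here I will exploit that $\PG$ is the image of the perfect matching polytope $PM(\GTg)$ under the linear ``folding'' map $\pi \colon \R^{E(\GTg)} \to \R^{2n+3}$ that sends each edge coordinate to the coordinate of its label (summing the two coordinates of an equally labeled pair). Since $\GTg$ is a chain of $d$ tiles glued along edges, it has $|V(\GTg)| = 2d+2$ vertices and $|E(\GTg)| = 3d+1$ edges, so Lemma \ref{LP734} gives $\dim PM(\GTg) = |E(\GTg)| - |V(\GTg)| + 1 = d$. As $\pi$ is linear and $PM(\GTg)$ has dimension $d$, it is enough to show that $\pi$ does not drop dimension, i.e. that $\pi$ is injective on the direction space $W$ of $\text{aff}(PM(\GTg))$. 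By Lemma \ref{LP734}, $W = \{\, x : \sum_{e \ni v} x_e = 0 \text{ for all } v \,\}$, the kernel of the unsigned incidence matrix of the bipartite graph $\GTg$; carried by alternating signs to the cycle space of this planar graph, whose bounded faces are exactly the $d$ tiles, $W$ acquires the basis $w_1, \ldots, w_d$, where $w_m$ is the alternating $(+1,-1)$ assignment around the $4$-cycle bounding tile $m$.

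The crux is therefore to show that the $d$ vectors $\pi(w_1), \ldots, \pi(w_d)$ are linearly independent. Since the alternating signs assign equal signs to opposite sides, $\pi(w_m)$ is a $\pm1$-combination of the indicators of the four sides of $Q_{\tau_{i_m}}$, with one pair of opposite sides receiving $+1$ and the other $-1$. I plan to prove independence by a peeling argument modeled on Lemma \ref{34indep}. In a dependence $\sum_m \lambda_m \pi(w_m) = 0$, first read off the coordinate of a boundary segment incident to $p_0$: such a segment is a side of the single quadrilateral $Q_{\tau_{i_1}}$, so it occurs only in $\pi(w_1)$ and forces $\lambda_1 = 0$ (symmetrically $\lambda_d = 0$). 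Then, inductively, the glue edge $\tau_{[\cc_k]}$ between consecutive tiles is a side of exactly the two quadrilaterals $Q_{\tau_{i_k}}$ and $Q_{\tau_{i_{k+1}}}$, so its coordinate involves only $\lambda_k$ and $\lambda_{k+1}$; knowing $\lambda_k = 0$ forces $\lambda_{k+1} = 0$, and the chain propagates to kill every $\lambda_m$. I expect the main obstacle to lie in the incidence bookkeeping of this last step, namely verifying that each $\tau_{[\cc_k]}$ really meets only those two crossed-diagonal quadrilaterals and that the end boundary segments are unique to the end tiles; I will settle this using Lemma \ref{SaFeatVsGaFeat} together with the fact that $T'$ is the thin neighborhood of $\cc$, so that $\cc$ never re-enters a triangle across a third side $\tau_{[\cc_k]}$. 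Once independence is established, $\dim \pi(W) = d$, giving $\dim \PG = d$ and hence $\text{aff}(\PG) = A$, as required.
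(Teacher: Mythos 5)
Your proposal is correct and takes essentially the same route as the paper's proof: establish containment of $\PG$ in the affine subspace cut out by (i)--(ii), get that subspace's dimension $|\Dg|$ from Lemmas \ref{34indep} and \ref{12equiv34}, compute $\dim PM(\GTg) = |\Dg|$ via Lemma \ref{LP734}, and then show the folding map $\pi$ does not drop dimension by a peeling argument along the chain of tiles. Your final step (showing the images $\pi(w_m)$ of the tile-boundary basis are linearly independent, propagating from a uniquely labeled edge of the first tile through the uniquely labeled interior glue edges) is exactly the paper's argument that the tile vectors $q_i$ are jointly independent from the kernel basis $b_i$ (i.e.\ that the direction space meets $\ker \pi$ trivially), merely carried out in label coordinates instead of edge coordinates, and it relies on the same facts from Lemma \ref{SaFeatVsGaFeat}.
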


\begin{proof}
We will define a new graph as well as a projection map.

Say the edge labels of $\GTg$ are $1..k$. Without loss of generality, say that edge labels $\{1,...,r\}$ occur on two distinct edges, and edge labels $\{r+1,...,k\}$ appear uniquely. (Note that by Remark \ref{1or2edgesSameLabel}, the same edge label cannot appear more than twice in $\GTg$, so those are the only two possibilities.)

Define a graph $G$ to be the graph $\GTg$, but with edges labeled in the following way. For each edge label $e$ that is unique in $\GTg$, label the corresponding edge of $G$ with that same label $e$. For each edge label $e$ that is found on two distinct edges in $\GTg$, label the corresponding edges of $G$ as $e$ and $k+e$.  Now $G$ is a graph that has the same structure as $\GTg$, but with unique edge labels. Therefore Lemma \ref{LP734} applies to $G$.

Define a projection $\pi: \mathbb{R}^{k+r} \rightarrow \mathbb{R}^k$ by $$(x_1,...,x_{k+r}) \mapsto (x_1+x_{k+1},...,x_r+x_{k+r},x_{r+1},...x_k).$$ This projection maps edge weight vectors of $G$ onto the corresponding edge weight vectors of $\GTg$, based on the above labeling scheme. Note that the projection respects the equivalence relation on vertices and edges of $\GTg$ defined earlier. Also note that the polytope $\PG$ is simply the image of $PM(G)$ under the map $\pi$. 

Now we can address our equations (i) and (ii). Note that the cluster expansion formula in Proposition \ref{MSmatching} is based on $\GTg$, so any edge $e$ that is not in $\GTg$ does not appear in the cluster expansion. Hence every such $x_e$ must be 0, so the equations (i) are true for $\PG$. For (ii), note that the polytope $\PG$ is simply the image of $PM(G)$ under the map $\pi$. Adding together equations in Lemma \ref{LP734}(ii') that correspond to equivalent vertices and then applying $\pi$, we get our equations (ii), so our equations (ii) are clearly true for $\PG.$ We now need only show that the equations (i)-(ii) are also sufficient (i.e. no others are needed). 

By Lemma \ref{LP734},  $dim \; PM(G) = |E(G)| - |V(G)| + 1.$ By the construction of the snake graph, this quantity is exactly the number of boxes in the graph, which is $d$. (Induction: the graph $G$ has the same structure as $\GTg$, which consists of $d$ boxes glued together. With 1 box, the formula from Lemma \ref{LP734} gives $dim \; PM(G) = 1.$ Each added box increases the number of vertices by 2 and the number of edges by 3, so $|E(G)| - |V(G)| + 1$ (the dimension) increases by 1. By induction, $dim \; PM(G) = d$.) Recall that in our case of the polygon, $d = |\Dg|$, so we have $dim \; PM(G) = |\Dg|$.

Recall that $\PG$ was defined as a polytope in $\R^{2n+3}$. From Lemmas \ref{34indep} and \ref{12equiv34}, we already know that (i)-(ii) consists of (or is equivalent to) $2n+3-|\Dg|$ linearly independent hyperplanes, each of which cuts down (by 1) the dimension of the affine subspace that $\PG$ lives in. Subtracting, we see that no more equations are needed if and only if the dimension of the affine hull of $\PG$ is exactly $|\Dg|$.

So we need to show that the projection map $\pi$ preserves the dimension of $PM(G).$ Instead of working directly with the affine hull of $PM(G)$ (an affine subspace of $\mathbb{R}^{3n+1}$), we will work instead with $Q$, the linear subspace that is given by writing the equations in Lemma \ref{LP734}(ii') with a 0 on the right-hand side instead of a 1. So $Q$ is just the affine hull of $PM(G)$, but shifted so it passes through the origin. We will show that $dim \; \pi(Q) = dim \; Q.$ (It suffices to show this instead because if $T$ is any linear transformation of a space $S$, then $T(S+\overrightarrow{OP}) = T(S)+\overrightarrow{O \; T(P)}$, which clearly has the same dimension as $T(S)$.)

First we will define a basis for $Q.$ We will imagine an element of $Q$ as an assignment of weights to the edges of the graph $G$ such that the sum of the weights of edges incident to each vertex is 0. By the above paragraph, we need to find $|\Dg|$ linearly independent vectors in $Q$. Define $q_i$ to be the following assignment of edge weights: weight 1 on the horizontal edges of the $i^{th}$ box in the snake graph, weight $-1$ on the vertical edges of the $i^{th}$ box, and weight 0 on all other edges (see figure below for what $q_2$ looks like in our example). Since there are $|\Dg|$ boxes in $G$, and these edge weights are linearly independent by construction, the $q_i$ form a basis for $Q$. 

Next we will define a basis for $ker \; \pi$. Again, we imagine a vector here as an assignment of weights to the edges of the graph $G$. For $1 \leq i \leq r,$ define $b_i$ to be following assignment of edge weights: weight 1 on the edge labeled $i$, weight $-1$ on the edge labeled $k+i$, and weight 0 on all other edges. See the figure below for what $b_4$ looks like in our example (note: here, the basis is $\{b_3, b_4, b_5\}$ rather than ``$\{b_1, b_2, b_3\}$" because we did not relabel the non-unique edges, but it doesn't matter). This is well-defined since at most two edges collapse to the same label under $\pi$, and the pairs of edges that do collapse are precisely the ones involved here. The $b_i$ here clearly form a basis for $ker \; \pi$. 

\begin{figure}[h!]
        \centering
        \begin{subfigure}[b]{0.46\textwidth}
                \centering
                \includegraphics[width=\textwidth]{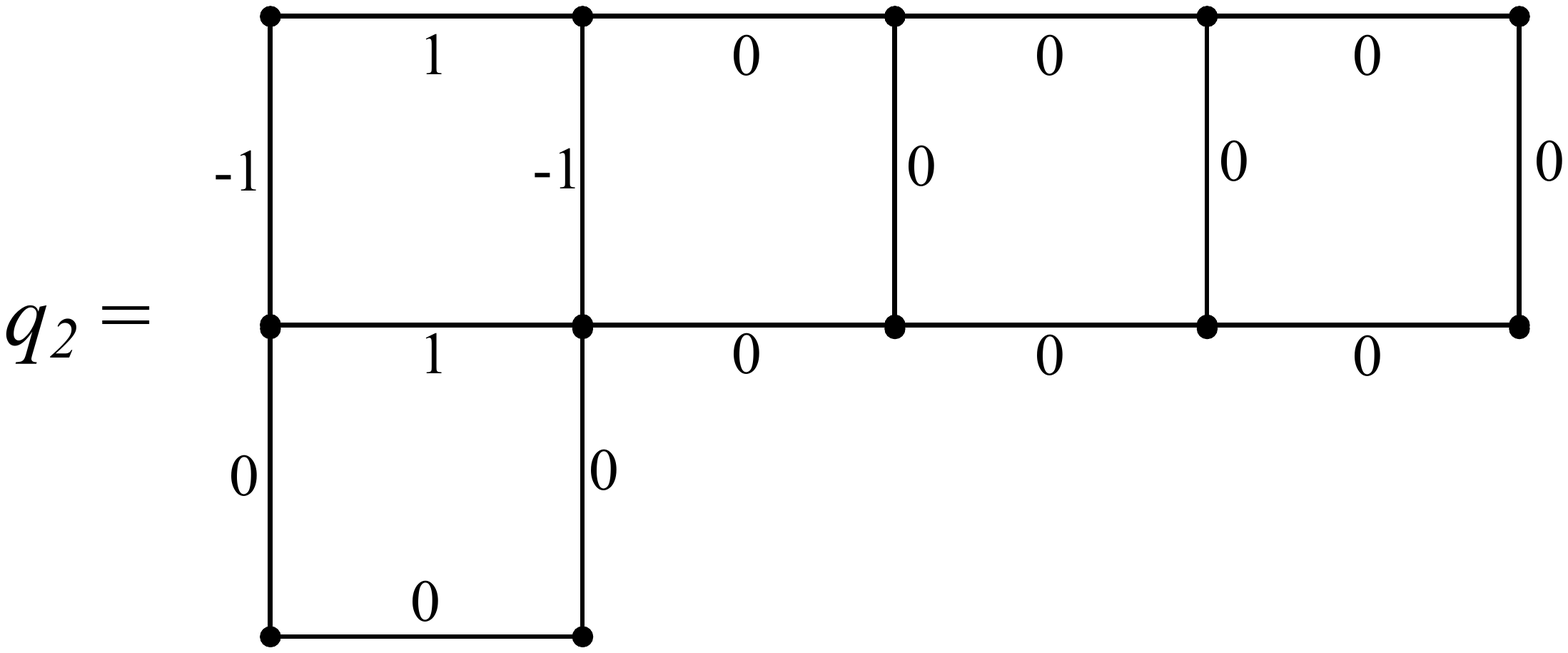}
                \label{fig:qi}
        \end{subfigure}
        \qquad
        \begin{subfigure}[b]{0.46\textwidth}
                \centering
                \includegraphics[width=\textwidth]{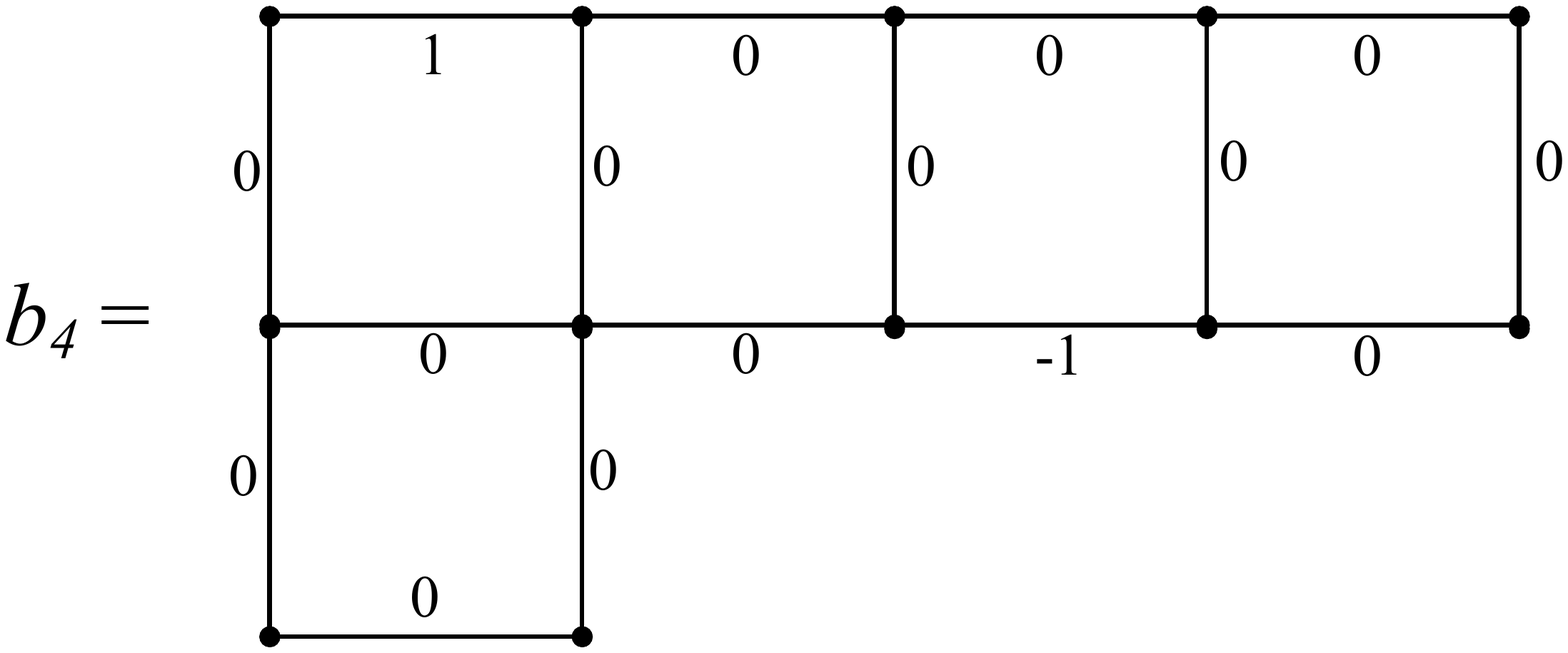}
                \label{fig:bi}
        \end{subfigure}
        \vspace{7pt}
\end{figure}

Now we will show that the $b_i$ (basis vectors of $ker \; \pi$) are all linearly independent from the $q_i$ (basis vectors of $Q$). Suppose a linear combination $\sum_i c_i q_i + d_i b_i = 0$. Note that all interior edges on the $b_i$ have weight 0. The leftmost edge of box 1 is nonzero only in $q_1$, forcing $c_1 = 0$. The edge of box 2 that touches box 1 is nonzero only in $q_2$ (forcing $c_2=0$), or in $q_1$ and $q_2$ (forcing $c_2 = -c_1 = 0$). Either way, $c_2 = 0$. Similarly, the edge of box 3 that touches box 2 is nonzero only in $q_3$, or in $q_2$ and $q_3$, forcing $c_3 = 0$. Continuing in this way, we see that all $c_i = 0.$ Now, the $b_i$ are already linearly independent, so all $d_i$ must be 0 as well. Therefore, we have shown that the basis vectors for $ker \; \pi$ are indeed linearly independent from the basis vectors of $Q$. From this we can conclude that $dim \; \pi(Q) = dim \; Q.$  

Thus $dim \; \PG = dim \; \pi(PM(G)) =  dim \; \pi(Q) = dim \; Q = dim \; PM(G) = |\Dg|,$ so no more equations are needed and we are done.
\end{proof}

\begin{corollary} \label{dimPGa}
$dim \; \NTg =\; dim \; \PG = |\Dg|$.
\end{corollary}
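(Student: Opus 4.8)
The plan is to derive Corollary~\ref{dimPGa} directly from the results already established, treating it as an essentially immediate consequence rather than a fresh computation. The statement asserts two equalities: $\dim\NTg = \dim\PG$ and $\dim\PG = |\Dg|$.

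The first equality, $\dim\NTg = \dim\PG$, follows from Remark~\ref{PGaNewtPolyOfNumerator}. There it is observed that $\NTg$ and $\PG$ differ only by a translation (by the vector $\indicator_{\Dg}$), and translation is an affine isomorphism that preserves dimension. So I would simply invoke that remark to conclude the two polytopes have equal dimension.

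The second equality, $\dim\PG = |\Dg|$, is literally the concluding line of the proof of Proposition~\ref{12affHull}, where the chain $\dim\PG = \dim\pi(PM(G)) = \dim\pi(Q) = \dim Q = \dim PM(G) = |\Dg|$ is established. I would cite Proposition~\ref{12affHull} (or the dimension count inside its proof) for this. Alternatively, one can read the dimension off from Proposition~\ref{PGaAffHull}: the affine hull of $\PG$ is cut out by $2n+3-|\Dg|$ linearly independent equations in $\R^{2n+3}$, so the affine hull has dimension $(2n+3)-(2n+3-|\Dg|)=|\Dg|$, and since these equations are exactly the affine hull of $\PG$, the polytope has this dimension. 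Either route closes the argument.

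I do not anticipate a genuine obstacle here, since all the substantive work has been done upstream: the bijection of perfect matchings with monomials (Proposition~\ref{MSmatching}), the translation relating the two polytopes (Remark~\ref{PGaNewtPolyOfNumerator}), and the careful dimension count showing $\pi$ preserves the dimension of $PM(G)$ (Proposition~\ref{12affHull}). The only point requiring a moment's care is making sure the two cited facts are combined in the right order—first equate $\dim\PG$ with $|\Dg|$ via the affine hull computation, then transfer to $\NTg$ via the translation—but this is purely bookkeeping. The corollary is best presented as a two-sentence assembly of Remark~\ref{PGaNewtPolyOfNumerator} and Proposition~\ref{12affHull} (equivalently Proposition~\ref{PGaAffHull}).
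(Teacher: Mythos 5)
Your proof is correct and matches the paper's own argument: the paper likewise obtains $\dim \PG = |\Dg|$ from the dimension count in the proof of Proposition~\ref{12affHull} and transfers it to $\NTg$ via the translation in Remark~\ref{PGaNewtPolyOfNumerator}. The alternative reading via the $2n+3-|\Dg|$ independent affine hull equations is also fine, since Proposition~\ref{PGaAffHull} is already established at that point.
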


\begin{proof}
The dimension of $\PG$ was discussed in the proof of Lemma \ref{12affHull}. The equality with $dim \; \NTg$ simply follows from Remark \ref{PGaNewtPolyOfNumerator}.
\end{proof}

\begin{example}{\rm
Using the same example as in section \ref{Intro}, we will illustrate Proposition \ref{PGaAffHull} and Corollary \ref{dimPGa}. Our computations were made or checked with the help of the software {\tt polymake} \cite{Polymake} and {\tt Mathematica}.
\begin{itemize}
\item (i)-(ii): Since edges 14 and 15 of $T$ do not appear in $\GTg$, the affine hull of $\PG$ includes the equations $x_{14} = 0$ and $x_{15} = 0$. The other equations defining the affine hull of $\PG$ come from each of the equivalence classes of vertices of $\GTg$. For example, the equivalence class $\{a,a',a''\}$ has cardinality 3, and the edges of $\GTg$ incident to those vertices are labeled 1, 2, 3, 4, and 7, so we get the equation $x_1+x_2+x_3+x_4+x_7 = 3.$ 
\item (iii)-(iv): We can get the same list of equations as above by using $T$ directly. Since edges 14 and 15 of $T$ do not appear in $T'$, we again get the equations $x_{14} = 0$ and $x_{15} = 0$. 
The other equations defining the affine hull of $\PG$ come from each vertex of $T'$. For example, the edges incident to vertex $A$ are $\{1,2,3,4,7\}$, three of which ($\{2,3,4\}$) are in $\cc \cup \Dg$, so we get the equation $x_1+x_2+x_3+x_4+x_7 = 3.$ 
\item Since we are triangulating a 9-gon, $n = 6$. Also, $\Dg = \{2,3,4,5,6\}$, so $dim \; \NTg = |\Dg| = 5$. The number of equations defining the affine hull is $2n+3 - |\Dg| = 10$.
\end{itemize}
So here are the 10 equations for the affine hull of $\PG$ (using either (i)-(ii) or (iii)-(iv)). The reader may check that they form a linearly independent set.
\begin{align*}
\text{absent edge 14:} \quad & x_{14} = 0 \\
\text{absent edge 15:} \quad & x_{15} = 0 \\
\{a,a',a''\} \text{ or } A: \quad & x_1+x_2+x_3+x_4+x_7 = 3 \\
\{b,b'\} \text{ or } B: \quad & x_5+x_6+x_7+x_8 = 2 \\
\{c\} \text{ or } C: \quad & x_8+x_9 = 1 \\
\{d\} \text{ or } D: \quad & x_6+x_9+x_{10} = 1 \\
\{e,e'\} \text{ or } E: \quad & x_4+x_5+x_{10}+x_{11} = 2 \\
\{f\} \text{ or } F: \quad & x_3+x_{11}+x_{12} = 1 \\
\{g\} \text{ or } G: \quad & x_2+x_{12}+x_{13} = 1 \\
\{h\} \text{ or } H: \quad & x_1+x_{13} = 1 \\
\end{align*}}
\end{example}

We now need a classic result from the literature on bipartite graphs and associated polytopes.

\begin{lemma} \label{BilleraSaran}
(\cite{BS} 2.1): Denote the complete bipartite graph by $K_{n,n}.$ The face lattice of the Birkhoff polytope $PM(K_{n,n})$ is isomorphic to the lattice of elementary subgraphs of $K_{n,n}$ ordered by inclusion.
\end{lemma}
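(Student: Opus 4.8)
The plan is to realize $PM(K_{n,n})$ as the Birkhoff polytope of doubly stochastic $n \times n$ matrices and to build an explicit order-preserving bijection between its faces and the elementary subgraphs of $K_{n,n}$. By the Birkhoff--von Neumann theorem the vertices of this polytope are exactly the permutation matrices, which are precisely the perfect matchings of $K_{n,n}$. Since the polytope is cut out by the inequalities $x_{ij} \ge 0$ together with the row- and column-sum equations, I would first record the standard fact that every face is obtained by promoting some of the inequalities $x_{ij} \ge 0$ to equalities: for a set $S$ of positions, $F_S = \{x \in PM(K_{n,n}) : x_{ij} = 0 \text{ for } (i,j) \notin S\}$ is a face (it is supported by the hyperplane $\sum_{(i,j) \notin S} x_{ij} = 0$), and conversely each face equals $F_S$ for $S$ the set of positions where it is not identically zero.

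The bijection itself: to a face $F$ assign the spanning subgraph $H_F$ whose edges are the positions at which some point of $F$ is strictly positive, equivalently the union of the supports of the vertices of $F$; to an elementary subgraph $H$ assign $F_H = \{x \in PM(K_{n,n}) : \operatorname{supp}(x) \subseteq E(H)\}$. The steps, in order, are: (1) $H_F$ is elementary --- immediate, since the vertices of a face of $PM(K_{n,n})$ are vertices of the whole polytope, hence perfect matchings, so $H_F$ is a union of perfect matchings; (2) for elementary $H$, the average $\frac{1}{N}\sum_M M$ over all perfect matchings $M$ of $H$ lies in $F_H$ and has support exactly $E(H)$, because ``elementary'' means every edge is used by some $M$ and so receives positive weight; (3) conclude $H_{F_H} = H$ and $F_{H_F} = F$, using that this average is a relative-interior point and that a face is recovered from the set of coordinates it forces to zero; (4) verify $F \subseteq F' \iff H_F \subseteq H_{F'}$, which promotes the bijection to a lattice isomorphism.

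The main obstacle I expect is steps (2)--(3): reconciling the combinatorial meaning of ``elementary'' (every edge lies in some perfect matching) with the geometric meaning of ``support of a relative-interior point.'' The clean resolution combines two facts --- the relative interior of a polytope consists of the points of maximal support, and averaging all the vertices of a face realizes that maximal support --- so that the edge set of $H_F$ and the support of a generic point of $F$ coincide. After the Birkhoff--von Neumann identification of the vertices, everything else is support bookkeeping, and no deeper graph theory is needed.
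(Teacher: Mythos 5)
The paper does not prove this lemma at all: it is quoted directly from the literature (\cite{BS}, 2.1) and used as a black box, so there is no internal proof to compare against. Your argument is a correct, self-contained proof, and it is essentially the standard one (the same support correspondence used in the cited reference). The three load-bearing facts are all correctly identified and correctly deployed: (a) Birkhoff--von Neumann identifies the vertices of $PM(K_{n,n})$ with permutation matrices, i.e.\ perfect matchings; (b) for a polytope cut out by equations plus the inequalities $x_{ij}\geq 0$, every face is the zero set of some collection of coordinates, and conversely a face is recovered from its equality set --- this is exactly what makes $F_{H_F}=F$ work, and you flag it rather than elide it; (c) for an elementary $H$, averaging all perfect matchings of $H$ produces a point of $F_H$ whose support is all of $E(H)$, which gives $H_{F_H}=H$ and reconciles the combinatorial and geometric notions of support. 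Order preservation in both directions then upgrades the bijection to a lattice isomorphism. The only loose end is a convention at the bottom of the two lattices: the empty face of the polytope should correspond to the edgeless spanning subgraph, which is consistent with the paper's definition of elementary subgraphs as unions of (possibly zero) perfect matchings; it is worth one sentence but is not a gap.
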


\begin{remark} \label{refinementBilleraSaran}
The isomorphism of lattices described in Lemma \ref{BilleraSaran} still holds when $K_{n,n}$ is replaced with any elementary subgraph of $K_{n,n}$. Specifically, if $H$ is an elementary subgraph of $K_{n,n}$, then under this isomorphism it corresponds to a face $F$ of $K_{n,n}$, and any elementary subgraph of $H$ corresponds to a face of $F$, so the face lattice of $F$ is isomorphic to the lattice of elementary subgraphs of $H$. Also note that our graph $G$ is an elementary subgraph of $K_{n,n}$, and $PM(G)$ is a face of $K_{n,n}$. 
\end{remark}

\begin{theorem} \label{latticesIsom}
The face lattice of $\NTg$ (and of $\PG$) is isomorphic to the lattice of all elementary subgraphs of $\GTg$, ordered by inclusion.
\end{theorem}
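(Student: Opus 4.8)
The plan is to deduce the result from the classical statement in Remark \ref{refinementBilleraSaran} by transporting it along the projection $\pi$ introduced in the proof of Proposition \ref{12affHull}. First, by Remark \ref{PGaNewtPolyOfNumerator} the polytopes $\NTg$ and $\PG$ differ only by a translation, so they have identical face lattices and it suffices to treat $\PG$. I would then recall from the proof of Proposition \ref{12affHull} the auxiliary graph $G$, obtained from $\GTg$ by relabeling its edges so that all labels become distinct (possible by Remark \ref{1or2edgesSameLabel}), together with the linear projection $\pi \colon \R^{k+r} \to \R^{k}$ satisfying $\pi(PM(G)) = \PG$. Since $G$ is bipartite, is an elementary subgraph of a suitable $K_{n,n}$, and has $PM(G)$ equal to the corresponding face, Remark \ref{refinementBilleraSaran} gives immediately that the face lattice of $PM(G)$ is isomorphic to the lattice of elementary subgraphs of $G$.

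The key step is to promote $\pi$ from a dimension-preserving surjection to an honest combinatorial isomorphism $PM(G) \to \PG$. I would argue that $\pi$ is \emph{injective} on the affine hull of $PM(G)$. Writing $Q$ for the linear subspace parallel to that affine hull (as in the proof of Proposition \ref{12affHull}), the basis computation carried out there establishes $\dim \pi(Q) = \dim Q$, which is equivalent to $\ker \pi \cap Q = \{0\}$. Hence if two points of the affine hull of $PM(G)$ have the same image under $\pi$, their difference lies in $\ker \pi \cap Q = \{0\}$, so the two points coincide. An injective affine map restricted to a polytope is an affine isomorphism onto its image, and affine isomorphisms carry the face lattice bijectively and order-preservingly. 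Therefore $\pi$ induces an isomorphism between the face lattice of $PM(G)$ and the face lattice of $\PG$.

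Chaining these two isomorphisms yields that the face lattice of $\PG$ is isomorphic to the lattice of elementary subgraphs of $G$. To finish, I would observe that $G$ and $\GTg$ share the same underlying (unlabeled) graph, differing only in how their edges are labeled, whereas the notion of elementary subgraph is purely structural (a subset of edges covering all vertices, each edge lying in some perfect matching) and never references edge labels. Consequently the lattice of elementary subgraphs of $G$ and the lattice of elementary subgraphs of $\GTg$ are literally the same poset under inclusion, and combining this with the translation of Remark \ref{PGaNewtPolyOfNumerator} gives the theorem for both $\PG$ and $\NTg$.

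I expect the main obstacle to be precisely the middle step: it is tempting to conclude the face-lattice isomorphism merely from the fact that $\pi$ preserves the overall dimension of $PM(G)$, but a dimension-preserving projection can in general collapse proper faces onto one another. The real content is the stronger fact—already secured by the linear independence of the $b_i$ and $q_i$ in the proof of Proposition \ref{12affHull}—that $\ker \pi$ meets the direction space $Q$ only at the origin, which is exactly what upgrades the surjection $\pi|_{PM(G)}$ to a genuine affine, hence combinatorial, isomorphism.
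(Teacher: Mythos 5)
Your proposal is correct and takes essentially the same route as the paper: reduce to $\PG$ by the translation of Remark \ref{PGaNewtPolyOfNumerator}, transport the Billera--Sarangarajan isomorphism (Remark \ref{refinementBilleraSaran}) through the projection $\pi$ from the proof of Proposition \ref{12affHull}, and observe that elementary subgraphs are label-independent, so $G$ and $\GTg$ give the same lattice. The only difference is that you spell out why $\pi|_{PM(G)}$ is an affine (hence combinatorial) isomorphism via $\ker \pi \cap Q = \{0\}$, a step the paper asserts in one line; note, though, that by rank--nullity this kernel condition is \emph{equivalent} to, not stronger than, the dimension equality $\dim \pi(Q) = \dim Q$, so the paper's terser inference was already sound.
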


\begin{proof}
Note that $\PG$ is the image of $PM(G)$ under the projection map $\pi$ defined in the proof of Proposition \ref{12affHull}. Since the two polytopes have the same dimension, $|\Dg|$, and $\pi$ is a linear transformation, $\PG$ must be combinatorially isomorphic to $PM(G)$. By Remark \ref{PGaNewtPolyOfNumerator}, $\NTg$ is a translate of $\PG$, so the face lattice of $\NTg$ must be isomorphic to the face lattice of $PM(G)$. But $G$ is an elementary subgraph of $K_{n,n}$, so by Remark \ref{refinementBilleraSaran}, the face lattice of $PM(G)$ is isomorphic to the lattice of elementary subgraphs of $G$, which is clearly isomorphic to the lattice of elementary subgraphs of $\GTg$, and we are done. 
\end{proof}

\begin{corollary} \label{monomCorrToVert}
The following are in one-to-one correspondence: 
\begin{align*}
{\rm(i)}& \quad \text{Laurent monomials in the $T$-expansion of } x_\cc \\ 
{\rm(ii)}& \quad \text{perfect matchings of } \GTg\\
{\rm(iii)}& \quad \text{vertices of } \NTg
\end{align*}
\end{corollary}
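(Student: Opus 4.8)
The plan is to deduce all three correspondences from the single observation that the projection $\pi$ constructed in the proof of Proposition \ref{12affHull} restricts to an \emph{affine isomorphism} from $PM(G)$ onto $\PG$. Granting this, the argument is short. First I would recall that, by the classical theory of bipartite matching polytopes behind Lemma \ref{LP734} (and used in Remark \ref{refinementBilleraSaran}), the vertices of $PM(G)$ are exactly the characteristic vectors of the perfect matchings of $G$, which are in obvious bijection with the perfect matchings of $\GTg$. An affine isomorphism carries vertices bijectively to vertices, so $\pi$ matches up the perfect matchings of $\GTg$ with the vertices of $\PG$; translating by $\indicator_{\Dg}$ (Remark \ref{PGaNewtPolyOfNumerator}) turns these into the vertices of $\NTg$. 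This gives the correspondence (ii)$\leftrightarrow$(iii).

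For (i)$\leftrightarrow$(ii), I would note that for a perfect matching $M$ the image $\pi(\indicator^G_M)$ is precisely the exponent vector of the weight $w(M)$, so after translation by $-\indicator_{\Dg}$ it is the exponent vector of the term $w(M)/(x_{i_1}\cdots x_{i_d})$ contributed by $M$ in Proposition \ref{MSmatching}. By that proposition every Laurent monomial of the $T$-expansion arises from at least one matching, while the injectivity of $\pi$ guarantees that distinct matchings yield distinct exponent vectors, hence distinct monomials; thus $M \mapsto w(M)/(x_{i_1}\cdots x_{i_d})$ is a bijection onto the monomials. Combined with the previous paragraph, the monomials, the matchings, and the vertices all line up, which is exactly the claimed three-way correspondence.

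The crux, and the only point that is not formal, is the injectivity of $\pi$ on $PM(G)$: a priori, edges of $\GTg$ sharing a label could cause two different matchings to have the same weight and so collapse two monomials into one. This is precisely what is ruled out inside the proof of Proposition \ref{12affHull}, where it is shown that $\dim \pi(Q) = \dim Q$, with $Q$ the linear subspace parallel to the affine hull of $PM(G)$. That dimension equality says $\pi$ has trivial kernel on $Q$, hence is injective on the affine hull of $PM(G)$ and therefore on $PM(G)$ itself; together with $\PG = \pi(PM(G))$, this is exactly the affine isomorphism assumed at the outset. So the main obstacle has effectively already been cleared in the earlier proof, and this corollary is the package in which its consequences are collected.
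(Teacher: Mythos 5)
Your proposal is correct, but it reaches the corollary by a different route than the paper does. The paper's own proof is a one-liner through Theorem \ref{latticesIsom}: a perfect matching is an atom of the lattice of elementary subgraphs of $\GTg$ (no perfect matching can contain another), the lattice isomorphism carries atoms to atoms, and the atoms of the face lattice of $\NTg$ are exactly its vertices; the link to monomials is then read off Proposition \ref{MSmatching}. You bypass the lattice theorem entirely and argue directly that $\pi$ restricts to an affine isomorphism $PM(G) \to \PG$, extracting injectivity on the affine hull from the dimension count $\dim \pi(Q) = \dim Q$ in the proof of Proposition \ref{12affHull}, and then transport the classical fact that the vertices of $PM(G)$ are precisely the matching incidence vectors (a $0/1$ point in a set of $0/1$ points is always a vertex of their convex hull). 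The two arguments rest on the same engine --- the dimension preservation of $\pi$ is also what drives the paper's proof of Theorem \ref{latticesIsom} --- so neither is more general, but your version has a concrete merit: it isolates and justifies the injectivity statement, namely that distinct matchings of $\GTg$ have distinct exponent vectors and hence no two matchings collapse onto a single Laurent monomial. The paper's terse proof leaves that point buried inside the lattice isomorphism, yet it is exactly the point that fails for more general surfaces (as noted in Section \ref{Other}, on the annulus two distinct matchings can yield the same monomial), so making it explicit is where the real content of the corollary lies.
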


\begin{proof}
By Proposition \ref{MSmatching}, the numerator of every Laurent monomial in a given cluster expansion corresponds to a perfect matching of $\GTg$, which is an atom in the elementary subgraph lattice described in Theorem \ref{latticesIsom}, corresponding to an atom in the face lattice of the polytope $\NTg$, i.e. a vertex.
\end{proof}

\begin{example}{\rm
Using the same example as in section \ref{Intro}, we will illustrate Corollary \ref{monomCorrToVert}.
One Laurent monomial in the $T$-expansion of $x_\cc$ can be written as $\frac{x_1 x_2 x_3 x_5^2 x_9}{x_2 x_3 x_4 x_5 x_6}$. \\
This corresponds to the $\NTg$ vertex $(1,0,0,-1,1,-1,0,0,1,0,0,0,0,0,0)$ and the perfect matching here:}

\begin{figure}[h!]
\centering
\includegraphics[scale=0.3]{matching}
\label{fig:matching}
\end{figure}
\end{example}

The lattices described in Theorem \ref{latticesIsom} are graded: the rank of a face is 1 more than its dimension, and the rank of an elementary subgraph is 1 more than the number of chordless cycles it contains. So the $d$-faces of $\NTg$ are in bijection with elementary subgraphs of $\GTg$ containing exactly $d$ chordless cycles (these cycles may or may not be disjoint).

In particular, let $P(i)$ be the perfect matching of $\GTg$ that corresponds to vertex $i$ of the polytope. Given a set of vertices $(i_1,\ldots,i_r)$ that make up a face, the corresponding elementary subgraph is obtained by superimposing $P(i_1),\ldots,P(i_r).$ Conversely, given an elementary subgraph $H$, if the set of all perfect matchings of $\GTg$ that lie entirely on $H$ is $P(i_1),\ldots,P(i_r)$, then $(i_1,\ldots,i_r)$ is the set of vertices making up the corresponding face. Note that to count only the number of vertices making up a face, we need only count the number of perfect matchings of the ``cycle part" of $H$ (i.e. the union of all the cycles in $H$), since the rest of $H$ is already matched. 

Also, in particular, the facets of $\NTg$ are the $(n-1)$-faces, so they can be found by finding the elementary subgraphs of $\GTg$ containing $n-1$ chordless cycles. We will do this below.

\begin{example}{\rm
We will illustrate Theorem \ref{latticesIsom} using a small example. Let $T$ be a triangulation of a pentagon, with $\cc$ a diagonal that crosses both diagonals of $T$. Then $\GTg$ consists of two boxes, and the face lattice of $\NTg$ is isomorphic to the following lattice of elementary subgraphs of $\GTg$.

\begin{figure}[h!]
\centering
\includegraphics[scale=0.55]{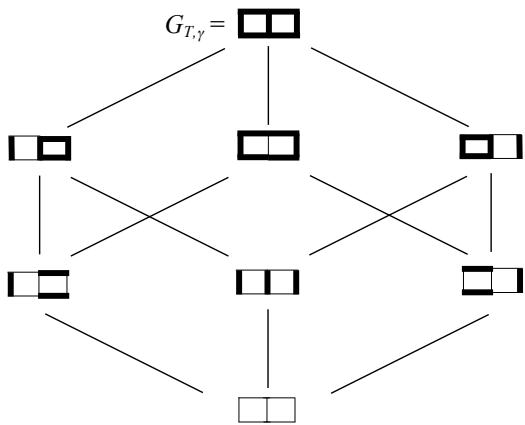}
\caption{Lattice of Elementary Subgraphs of $\GTg$}
\label{fig:elemsubgrlattice}
\end{figure}

In this example, the dimension of $\NTg$ is $|\Dg| = 2$, and $\NTg$ is a triangle. Also note that the length of every maximal chain in this lattice is 3.} 
\end{example}

\begin{example}{\rm
Theorem \ref{latticesIsom} can be used to find the $f$-vector of $\NTg$, by counting how many elementary subgraphs contain $d$ chordless cycles for each $d$ from 0 to $|\Dg|-1$. For example, the $f$-vector of $\NTg$ from our original example in Figure \ref{fig:example} is $(11,31,39,25,8)$.
}\end{example}

We now turn our attention to the facets of our Newton polytopes.

\begin{proposition} \label{facetsPGaFromGa}
For any diagonal $\cc$, the facets of the polytope $\PG$ can be found from the snake graph $\GTg$ by writing the following inequalities:
\begin{align*}
{\rm(i)}\quad & x_e \geq 0 \text { for each } e \in \GTg \text { such that e is an interior edge of } \GTg.\\
{\rm(ii)}\quad & x_e \geq 0 \text { for each pair of opposite exterior edges } \{e,f\} \text{ of $\GTg$ such that at least one }\\
& \text {of the two edges has a label e that is unique in $\GTg$. (see figure below)} \\
{\rm(iii)}\quad & x_a+x_b+x_c \leq 2 \text{ for each pair of opposite exterior edges } \{e,f\} \text { of $\GTg$ that includes }\\ 
& \text {no unique labels, where a, b and c are the labels of edges shown in the figure below.}
\end{align*}

\begin{figure}[h!]
\centering
\includegraphics[scale=0.4]{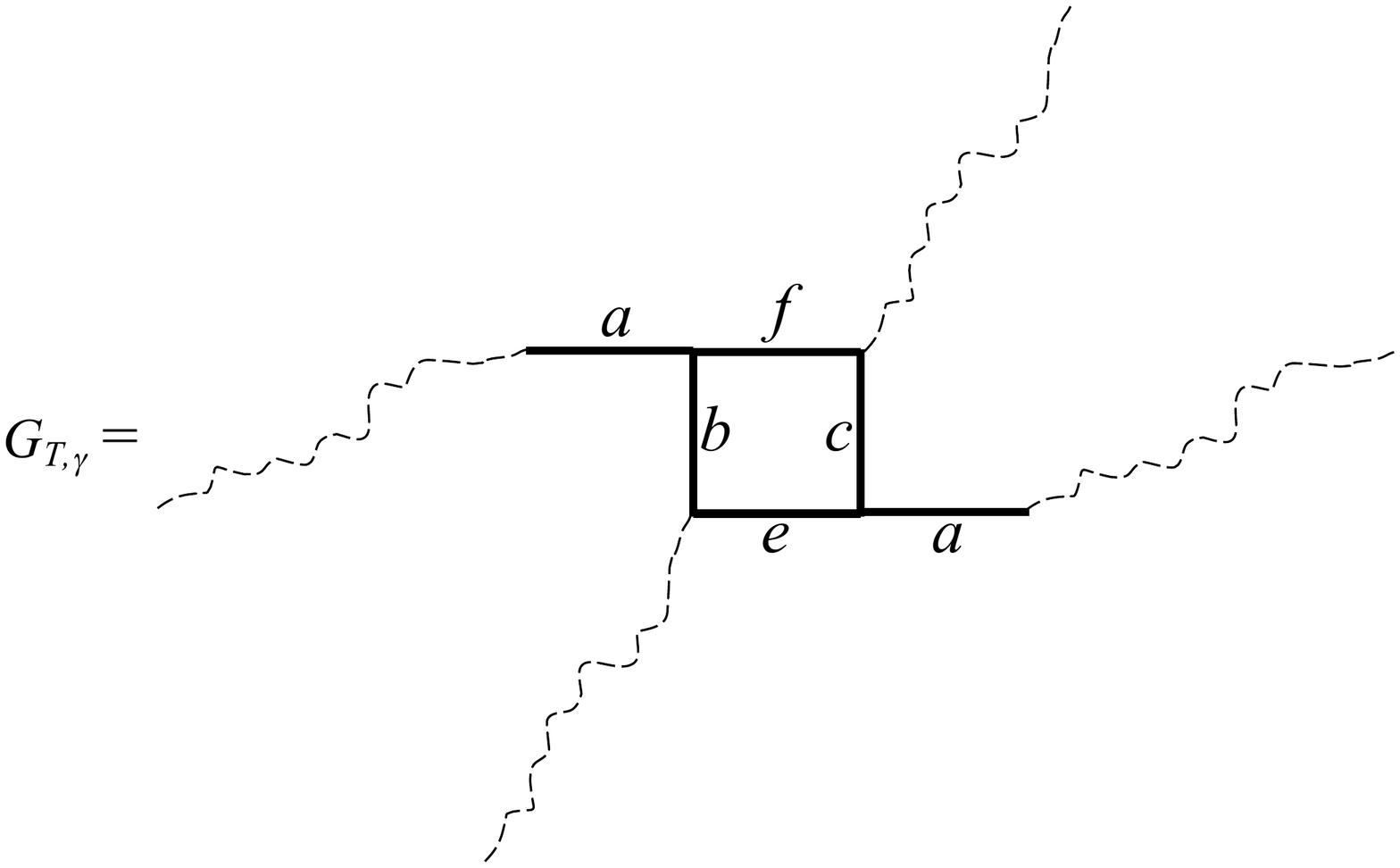}
\label{fig:facetlemmafig1}
\end{figure}

\end{proposition}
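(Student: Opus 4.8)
The plan is to leverage the classical description of the perfect matching polytope $PM(G)$ from Lemma \ref{LP734} and push it through the projection $\pi$ introduced in the proof of Proposition \ref{12affHull}, since $\PG = \pi(PM(G))$ and the two polytopes are combinatorially isomorphic. By Lemma \ref{LP734}, the facets of $PM(G)$ are exactly the inequalities $x_e \geq 0$ for each edge $e$ of the unique-labeled graph $G$ (those that are not implied by the affine hull equations). So I would first determine which of these nonnegativity inequalities remain facet-defining after applying $\pi$, and then translate each surviving facet into a statement about $\GTg$ using the edge-identification scheme (recall that each repeated label $e$ in $\GTg$ corresponds to two edges $e$ and $k+e$ in $G$, and $\pi$ sends $x_e + x_{k+e}$ to the single coordinate $x_e$).

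First I would handle the two classes where the relevant edge of $\GTg$ is uniquely labeled. If $e$ is an \emph{interior} edge of $\GTg$ (case (i)), then $e$ is unique in $\GTg$ by Lemma \ref{SaFeatVsGaFeat}, so its coordinate is unaffected by $\pi$, and the inequality $x_e \geq 0$ survives directly as a facet. For case (ii), I would use the geometry of a tile: for a pair of opposite exterior edges $\{e,f\}$ in which at least one label (say $e$) is unique, the projection does not merge that coordinate, so $x_e \geq 0$ again descends from the corresponding facet of $PM(G)$. The key verification in both of these cases is that the inequality is not implied by the affine-hull equations of Proposition \ref{PGaAffHull}, i.e. that it genuinely cuts out a facet and not a lower-dimensional face; this should follow from exhibiting a perfect matching of $\GTg$ using $e$ and one avoiding $e$, so the corresponding face is a proper facet of dimension $|\Dg| - 1$.

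The genuinely new phenomenon — and the main obstacle — is case (iii), where a pair of opposite exterior edges $\{e,f\}$ carries only repeated labels. Here both coordinates get merged under $\pi$, so the two separate nonnegativity facets $x_e \geq 0$ and $x_{k+e} \geq 0$ of $PM(G)$ collapse, and their images no longer yield independent inequalities on $\PG$; instead they combine with the affine-hull relation at the shared tile to produce an upper-bound inequality of the form $x_a + x_b + x_c \leq 2$. I would derive this by starting from the local affine-hull equation $\sum_{e' \ni v} x_{e'} = |[v]|$ around the relevant vertex (so that one of the collapsed nonnegativity constraints, after substitution, becomes an inequality bounding $x_a + x_b + x_c$ from above), and carefully read off which three edge labels $a,b,c$ appear, using the figure and the structure of a tile. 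The delicate point is confirming that exactly these inequalities survive as facets of $\PG$ and that no spurious or redundant inequalities are introduced by the projection; this requires checking that $\pi$ maps the facet of $PM(G)$ onto a facet of $\PG$ (rather than collapsing its dimension), which I would verify using the kernel analysis from the proof of Proposition \ref{12affHull} — namely that the basis vectors $b_i$ of $\ker \pi$ are transverse to the relevant facet.

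Finally, I would confirm completeness: every facet of $\PG$ must arise as the image of some facet of $PM(G)$ under $\pi$, since combinatorial isomorphism guarantees a bijection between faces of the two polytopes preserving the face lattice. Thus exhausting the three cases above — interior unique edges, opposite pairs with a unique label, and opposite pairs with only repeated labels — accounts for every facet, which completes the argument.
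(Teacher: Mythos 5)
Your overall reduction is sound in outline and genuinely different from the paper's: since the kernel analysis in Proposition \ref{12affHull} shows $\ker\pi$ meets the affine hull of $PM(G)$ trivially, $\pi\colon PM(G)\to\PG$ is an affine isomorphism, so facets of $\PG$ are exactly the images of facets of $PM(G)$; and your derivation in case (iii) (summing the two vertex equations at opposite corners of the tile and using nonnegativity of the two removed coordinates) does produce the inequality $x_a+x_b+x_c\le 2$ correctly. The genuine gap is the step you treat as routine: deciding \emph{which} of the inequalities $x_e\ge 0$ from Lemma \ref{LP734} are facet-defining for $PM(G)$. Lemma \ref{LP734} gives a valid but redundant description of the polytope, and your opening claim --- that the facets are exactly the inequalities not implied by the affine hull equations --- is false: an inequality can be irredundant relative to the equations alone yet still cut out a face of codimension two or more. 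Your proposed certificate (one perfect matching using $e$, one avoiding $e$) only shows that $PM(G)\cap\{x_e=0\}$ is a proper nonempty face; it says nothing about its dimension.

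This is not a technicality, because there are edges of $\GTg$ covered by none of (i), (ii), (iii): namely exterior edges whose opposite edge on their tile is interior (the two exterior edges of any corner tile, and e.g.\ the left edge of the first tile when the second tile lies to its right). For these your test passes but the inequality is not a facet. Concretely, take the three-tile L-shaped snake graph ($d=3$, one corner, so $\PG$ is a $3$-simplex with $2d-1-t=4$ facets by Corollary \ref{numFacets}, matching its four perfect matchings/vertices): for either exterior edge $e$ of the corner tile there are matchings with and without $e$, yet $\PG\cap\{x_e=0\}$ contains only two of the four vertices, i.e.\ it is an edge of the simplex, not a facet. So your argument would certify spurious facets, and by the same token your completeness claim is circular: knowing every facet of $\PG$ is the image of a facet of $PM(G)$ does not show the three cases exhaust the facets until you have both (a) proved the inequalities in (i)--(iii) are facet-defining and (b) ruled out all remaining edges such as the ones above. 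This dimension count is precisely the information the paper extracts from the elementary-subgraph lattice (Theorem \ref{latticesIsom}): a facet is an elementary subgraph with one fewer chordless cycle than $\GTg$, and the union of the perfect matchings avoiding a given edge has this form only in the three listed situations. Note also that the paper's Case 2 rests on the fact that opposite exterior edges of a snake graph are either both present or both absent in every perfect matching --- this is why a pair $\{e,f\}$ yields a single facet rather than two --- and that fact, which also needs proof, appears nowhere in your proposal.
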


\begin{remark} \label{facetPGaDetails}
Details of Proposition \ref{facetsPGaFromGa}:
\begin{itemize}
\item In (ii), if both of the opposite exterior edges are unique labels in $\GTg$ (only possible if $T'$ is a triangulated quadrilateral or triangulated pentagon), arbitrarily choose one to be $e$. 
\item The pair of edges $\{e,f\}$ in (ii) happens precisely on the first, second, penultimate, and last tile of $\GTg$ (the ones of these that are not corners). The pair of edges $\{e,f\}$ in (iii) happens on all other tiles in $\GTg$ that are not corners. This follows from Lemma \ref{SaFeatVsGaFeat}. 
\item In the figure above, the orientation of the pair of edges $\{e,f\}$ may be vertical, not horizontal, as below, and the lemma continues to hold.
\end{itemize}
\end{remark}

\begin{figure}[h!]
\centering
\includegraphics[scale=0.4]{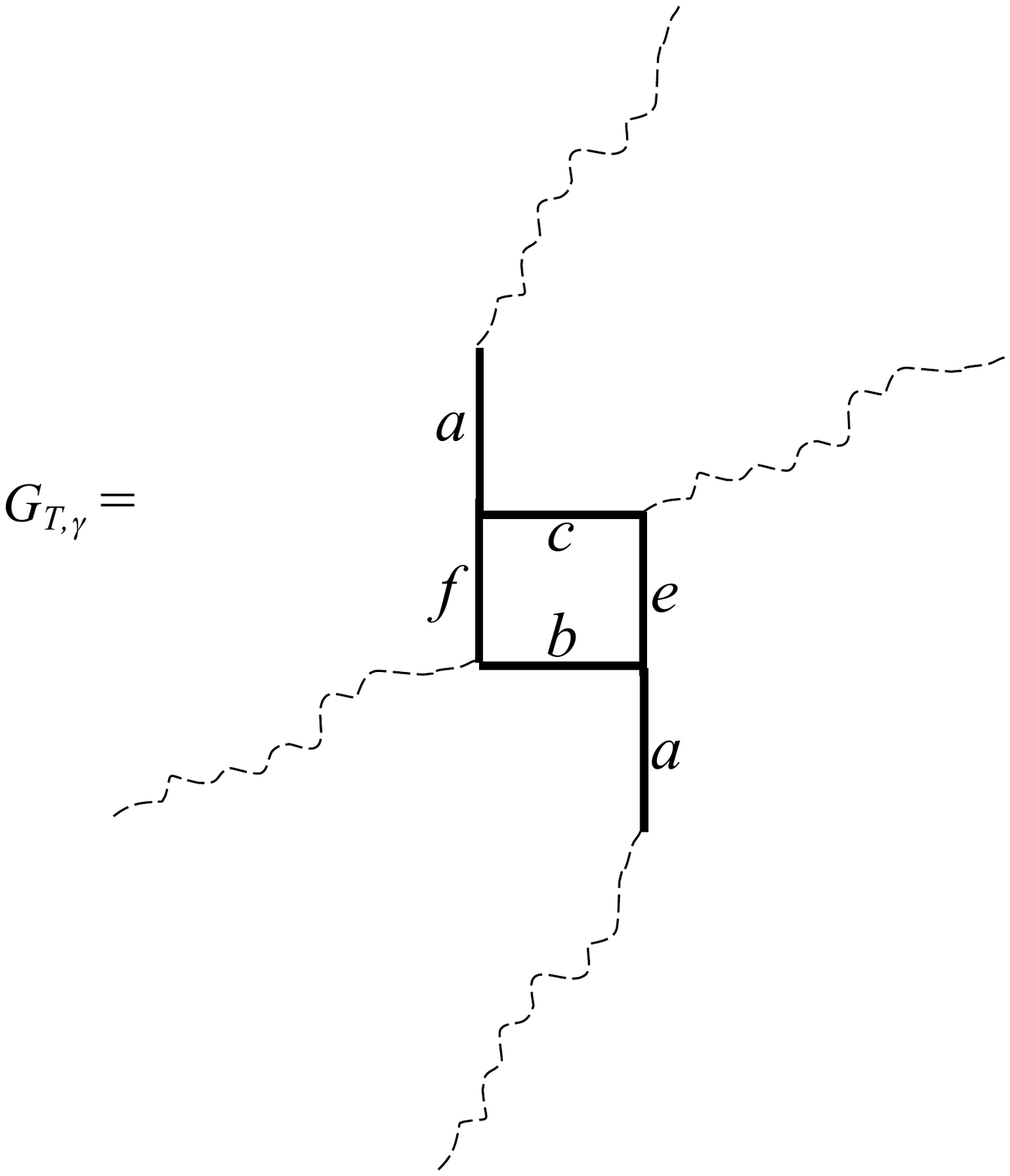}
\label{fig:facetlemmafig2}
\end{figure}

\pagebreak

\begin{proof}

Let $F(\GTg)$ be the set of all elementary subgraphs of $\GTg$ containing $n-1$ chordless cycles. We know from the comments after the proof of Theorem \ref{latticesIsom} that each facet of $\PG$ corresponds to an elementary subgraph in $F(\GTg)$. Every such subgraph $H \in F(\GTg)$ falls into one of 3 cases, which will become (i), (ii), and (iii):\\

Case 1: 

\begin{figure}[h!]
\centering
\includegraphics[scale=0.4]{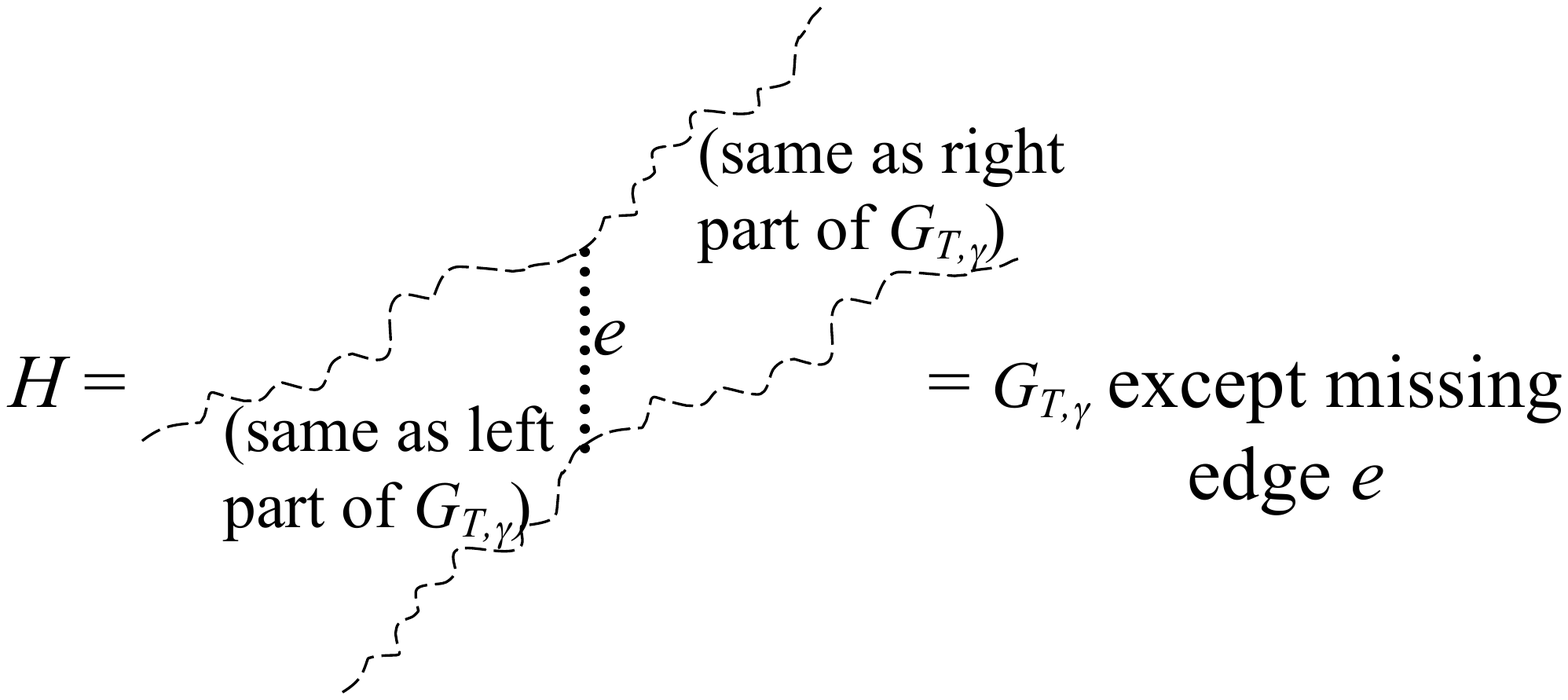}
\label{fig:case1}
\end{figure}

In Case 1, $H$ is just $\GTg$, but missing exactly 1 edge. It must be an interior edge, and interior edge labels in snake graphs are unique (see Lemma \ref{SaFeatVsGaFeat}). Say the label is $e$. (The edge $e$ may be horizontally oriented, as in the figure, or it may be vertically oriented - it doesn't matter for this argument.) Then every perfect matching of $\GTg$ that is a subgraph of $H$ has no edge labeled $e$, so the characteristic vector of the vertex corresponding to such a perfect matching has $x_e = 0$. Conversely, every perfect matching of $\GTg$ that is not a subgraph of $H$ must include the edge labeled $e$ (since $H$ is just $\GTg$, but missing that one edge), so the characteristic vector of the vertex corresponding to such a matching does not have $x_e = 0$, but rather $x_e=1$. Thus the hyperplane $x_e = 0$ is both necessary and sufficient - it includes all the desired vertices and no others. This confirms (i).\\

Case 2: 

\begin{figure}[h!]
\centering
\includegraphics[scale=0.4]{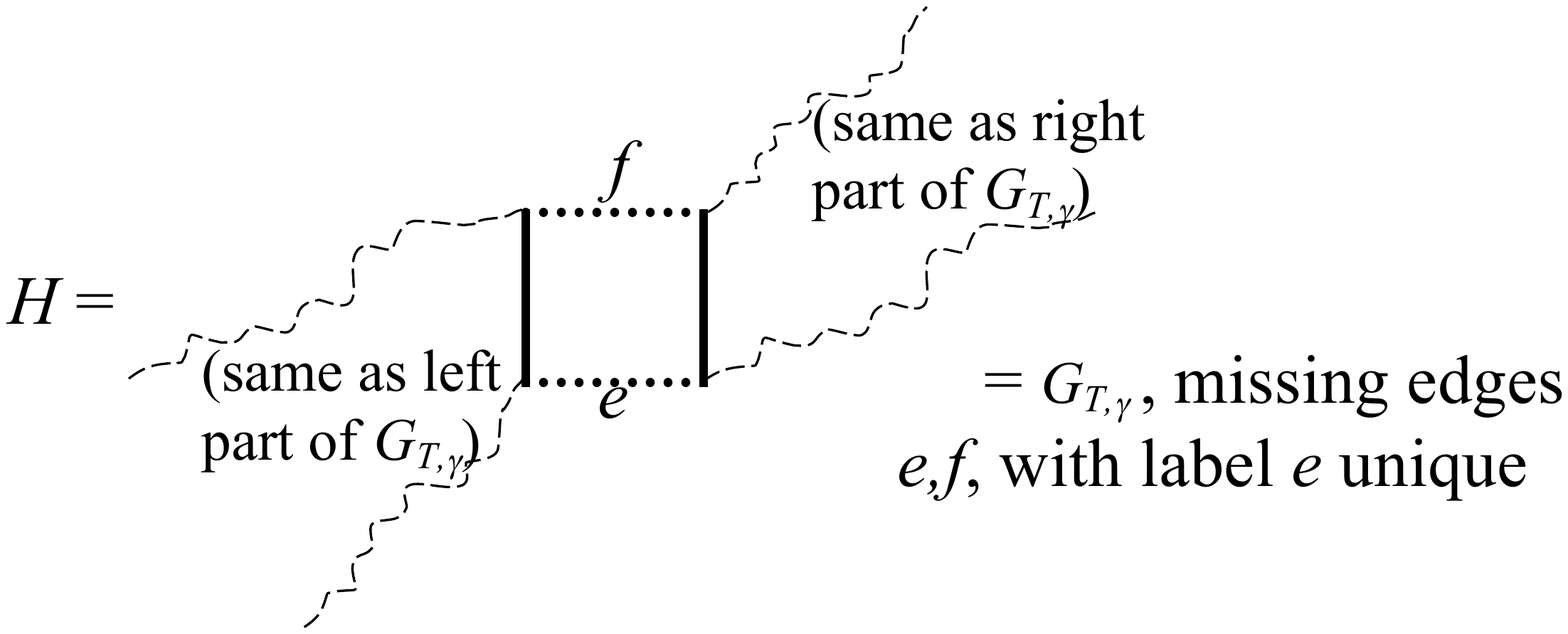}
\label{fig:case2}
\end{figure}

In Case 2, $H$ is just $\GTg$, but missing exactly 2 exterior edges opposite each other, at least one of which has a label that is unique in $\GTg$. Say this unique label is $e$. (If both edge labels appear uniquely in $\GTg$, choose either.) So $H$ is the union of two disjoint snake graphs, and the two ``missing" edges may be horizontal, as in the figure, or they may be vertical. Furthermore, it doesn't matter whether the uniquely labeled edge $e$ happens to be the top or the bottom ``missing" edge. As in Case 1, every perfect matching of $\GTg$ that is a subgraph of $H$ has no edge labeled $e$, so the characteristic vector of the vertex corresponding to such a perfect matching has $x_e = 0$. Conversely, every perfect matching of $\GTg$ that is not a subgraph of $H$ must include the edge labeled $e$, or the edge opposite to it. But opposite exterior edges of a perfect matching of a snake graph agree (either both present or both absent), so since such a matching includes either, it must include both. Thus the characteristic vector of the vertex corresponding to such a matching does not have $x_e = 0$, but rather $x_e=1$. So again, the hyperplane $x_e = 0$ is both necessary and sufficient - it includes all the desired vertices and no others. This confirms (ii).\\

Case 3: $H$ is just $\GTg$, but missing exactly 2 exterior edges opposite each other, neither of which has a label that is unique in $\GTg$. Then, as in Case 2, $H$ is the union of two disjoint snake graphs, and without loss of generality (i.e. the two parts may be vertically oriented for example, but it doesn't matter), looks like the figure below.

\begin{figure}[h!]
\centering
\includegraphics[scale=0.4]{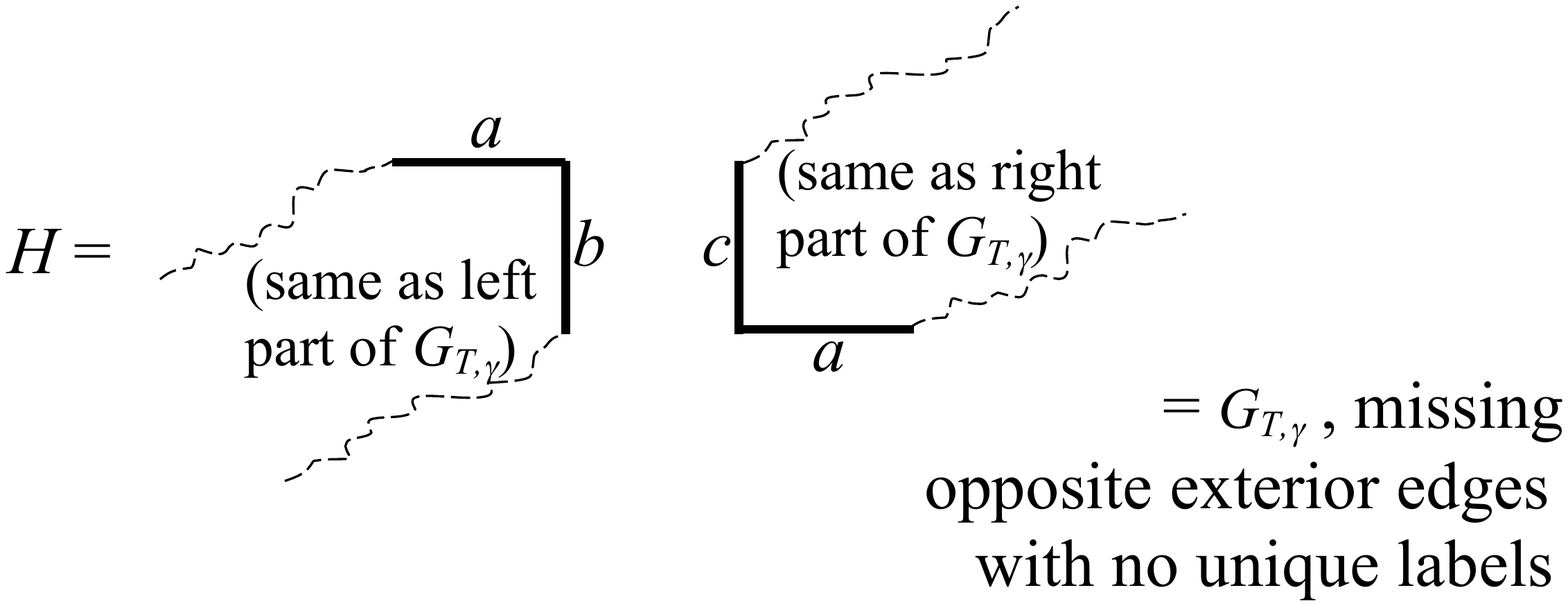}
\label{fig:case3}
\end{figure}

Note that the construction of snake graphs (or the comments about vertex equivalence earlier in this paper) guarantees the two edges labeled $a$ do indeed have the same label, and that no other edges of $\GTg$ are labeled $a$. Also note that the edges labeled $b$ and $c$ are interior edges of $\GTg$, so they are uniquely labeled. Every perfect matching of $\GTg$ that is a subgraph of $H$ looks locally like one of four types: 

\pagebreak

\begin{figure}[h!]
\centering
\includegraphics[scale=0.4]{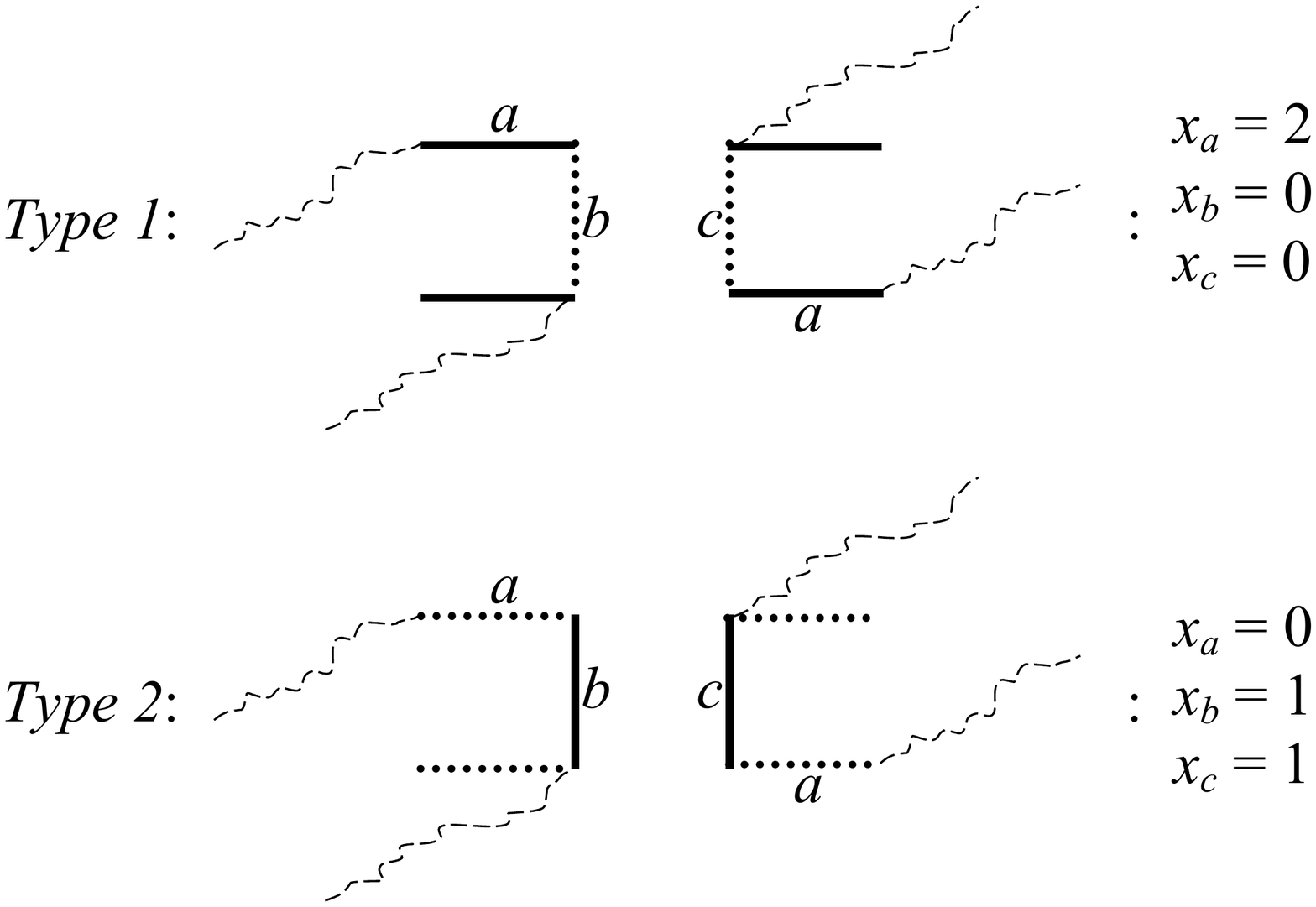}
\label{fig:fourtypes1}
\includegraphics[scale=0.4]{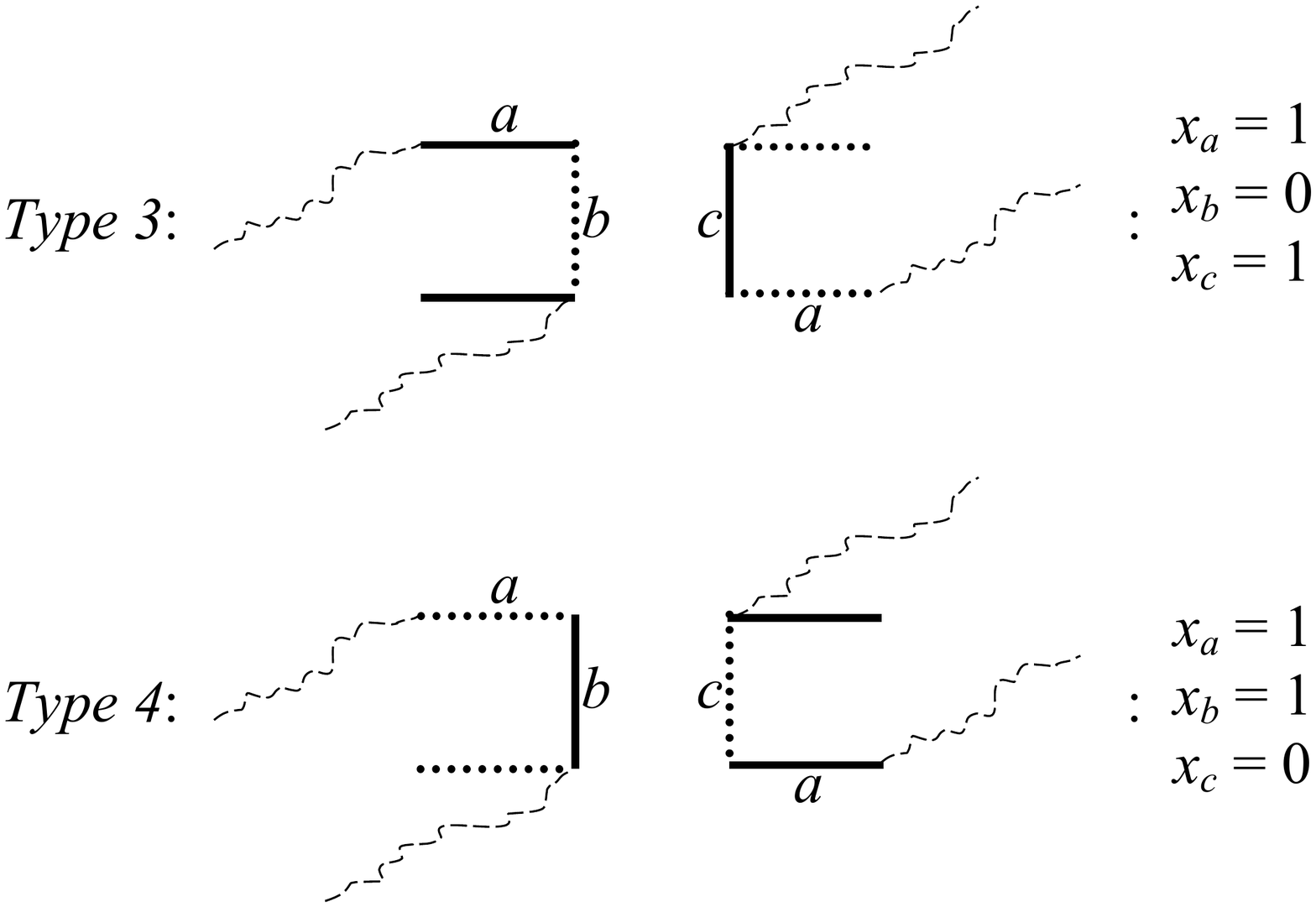}
\label{fig:fourtypes2}
\end{figure}

In each of these 4 cases, it is true that $x_a+x_b+x_c = 2$. Conversely, every perfect matching of $\GTg$ that is not a subgraph of $H$ must include both edges of $\GTg \backslash H$, so it cannot include any edge labeled $a, b,$ or $c$, hence $x_a+x_b+x_c = 0$, not $2$. Thus the hyperplane $x_a+x_b+x_c = 2$ is both necessary and sufficient - it includes all the desired vertices and no others. This case proves that (iii) gives a facet, and the direction of inequality that defines the half-space is clear.
\end{proof}

\begin{corollary} \label{numFacets}
If $|\Dg| \geq 2$, the number of facets of $\NTg$ (or $\PG$) is $2d -1-t$, where $t$ is the number of corners in the snake graph, or equivalently, the number of imbalanced diagonals in $T'$. (If  $|\Dg| =1$, the polytope is a line segment, so it has 2 facets that are the endpoints.)
\end{corollary}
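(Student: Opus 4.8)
The plan is to count the facets by directly tallying the three families of facet-defining inequalities produced by Proposition~\ref{facetsPGaFromGa}. That proposition shows that every facet of $\PG$ (equivalently of $\NTg$, which is only a translate of $\PG$ by Remark~\ref{PGaNewtPolyOfNumerator}) arises in exactly one of three ways: from an interior edge of $\GTg$ via case (i), from a pair of opposite exterior edges carrying a unique label via case (ii), or from a pair of opposite exterior edges with no unique label via case (iii). Since cases (ii) and (iii) together range over \emph{all} pairs of opposite exterior edges of $\GTg$, the total number of facets is
\[
\#\{\text{interior edges of } \GTg\} \; + \; \#\{\text{pairs of opposite exterior edges of } \GTg\}.
\]
These two families index disjoint sets of facets (the first gives hyperplanes $x_e=0$ for interior labels $e$, the second the remaining inequalities), and within each family distinct edges or distinct pairs give distinct facets, so there is no overcounting. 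Thus it suffices to count these two quantities.

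For the first quantity, recall that $\GTg$ is built by gluing $d$ tiles in sequence with consecutive tiles glued along a single shared edge; these shared edges are exactly the interior edges of $\GTg$, and there are $d-1$ of them (equivalently, by Lemma~\ref{SaFeatVsGaFeat} they are the boundary segments of $T'$ not incident to $\cc$). For the second quantity I would argue tile by tile: a pair of opposite exterior edges lies entirely within one tile, so distinct tiles contribute distinct pairs. Each tile contributes at most one such pair, and contributes exactly one precisely when it is \emph{not} a corner. Indeed, a corner tile is glued to its two neighbors along two \emph{adjacent} sides, so its two free sides are also adjacent and it has no pair of opposite exterior edges; conversely the first tile, the last tile, and every straight interior tile have their glued (or boundary) edges on opposite sides, leaving the two remaining exterior edges opposite one another. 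As $\GTg$ has $d$ tiles of which $t$ are corners, there are $d-t$ such pairs. Adding the two counts gives $(d-1)+(d-t)=2d-1-t$, as claimed.

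It remains to justify the asserted equality between $t$ and the number of imbalanced diagonals of $T'$, which follows from Lemma~\ref{SaFeatVsGaFeat}: each imbalanced diagonal in $\{\tau_{i_2},\dots,\tau_{i_{d-1}}\}$ produces exactly one corner (at the tile between its two copies) and each balanced one produces none, giving a bijection between corners and imbalanced middle diagonals. Moreover the end diagonals $\tau_{i_1}$ and $\tau_{i_d}$ are always balanced: one pair of opposite sides of $Q_{\tau_{i_1}}$ consists of the boundary segment incident to $p_0$ together with $\tau_{[\cc_1]}$, and $\tau_{[\cc_1]}$ is a boundary segment of $T'$ since it becomes an interior edge of $\GTg$ (similarly for $\tau_{i_d}$ with $\tau_{[\cc_{d-1}]}$). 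Hence the imbalanced diagonals of $\Dg$ are exactly the imbalanced middle diagonals, and their number is $t$. The degenerate case $|\Dg|=1$, where $\GTg$ is a single tile and $\NTg$ is a segment with two vertices (hence two facets), is excluded from the formula and handled separately as in the statement.

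The main obstacle is the geometric bookkeeping in the second count: one must verify rigorously that a corner tile has no pair of opposite exterior edges while every non-corner tile has exactly one, so that the pairs of opposite exterior edges are genuinely in bijection with the non-corner tiles. This rests on knowing precisely which edges of each tile are glued (interior) versus free (exterior) as the snake graph is assembled, which is exactly the content of the construction of $\GTg$ and of Lemma~\ref{SaFeatVsGaFeat}; once that correspondence is in hand, the remainder of the argument is arithmetic.
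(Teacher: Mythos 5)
Your proof is correct and follows essentially the same route as the paper's: count $d-1$ facets from the interior edges via Proposition \ref{facetsPGaFromGa}(i), one facet per non-corner tile ($d-t$ of them) from the pairs of opposite exterior edges via Proposition \ref{facetsPGaFromGa}(ii)--(iii), and invoke Lemma \ref{SaFeatVsGaFeat} to identify corners with imbalanced diagonals, giving $(d-1)+(d-t)=2d-1-t$. Your explicit check that the end diagonals $\tau_{i_1}$ and $\tau_{i_d}$ are always balanced actually fills in a detail the paper leaves implicit (Lemma \ref{SaFeatVsGaFeat} only treats the middle diagonals $\{\tau_{i_2},\ldots,\tau_{i_{d-1}}\}$), so your identification of $t$ with the number of imbalanced diagonals of $T'$ is, if anything, more carefully justified.
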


\begin{proof}
Assume $|\Dg| \geq 2$. First note that by Lemma \ref{SaFeatVsGaFeat}, the number of imbalanced diagonals in $T'$ is the number of corners in $\GTg$, so it is valid to call them both $t$. The number of interior edges of $\GTg$ is 1 less than the number of boxes, so Proposition \ref{facetsPGaFromGa}(i) gives $d-1$ facets. Combining Proposition \ref{facetsPGaFromGa}(ii)-(iii), we see that we get 1 facet for every pair of opposite exterior edges, and since $|\Dg| \geq 2$, there are at least two boxes in $\GTg$, so there is clearly 1 pair of opposite exterior edges for every box that is not at a corner in the graph. The number of such boxes not at a corner is $d -t$. Adding $d -1$ to $d -t$ gives the desired result. 
\end{proof}

\begin{example}{\rm
Using the same example as in Section \ref{Intro}, we will illustrate Proposition \ref{facetsPGaFromGa} and Corollary \ref{numFacets}. 
\begin{itemize}
\item (i): The interior edges of $\GTg$ are 12, 11, 7, and 10, giving the facet-defining inequalities $x_{12} \geq 0$, $x_{11} \geq 0$, $x_7 \geq 0$, and $x_{10} \geq 0$.
\item (ii): Checking the first, second, penultimate, and last box of $\GTg$, we get the pairs of opposite exterior edges $\{3,13\},$ [no pair], $\{4,6\}$, and $\{5,9\}$, respectively, with the unique labels in each pair being 13, 6, and 9, respectively. This gives the facets $x_{13} \geq 0$, $x_6 \geq 0$, and $x_9 \geq 0$.
\item (iii): Checking the remaining (i.e. third) box, we get the inequality $x_4+x_7+x_{11} \leq 2$.
\item Since there is 1 corner in $\GTg$ (the second box), or equivalently, 1 imbalanced diagonal in $T'$ (the edge labeled 3), we have $t = 1$. Again, $d = |\Dg| = 5$, so the number of facets is $2d -1-t = 8.$ This confirms that we have indeed found them all. Here they are in a list:
\end{itemize}
$$
x_{12} \geq 0,\; \; x_{11} \geq 0, \; \; x_7 \geq 0,\; \; x_{10} \geq 0, \; \; x_{13} \geq 0,\; \; x_6 \geq 0,\; \; x_9 \geq 0, \; \; x_4+x_7+x_{11} \leq 2
$$}
\end{example}

\begin{theorem} \label{NTgFromTg}
For any diagonal $\cc$, the polytope $\NTg$ can be found directly from $T$ as follows:\\
Affine hull equations:
\begin{align*}
{\rm(i)}& \quad \text{For each edge $e$ of } T \backslash T'  \text{, write } x_e = 0.\\
{\rm(ii)}& \quad \text{For each vertex $w \in T'$, write } \sum_{e \ni w} x_e = 1 \text{ if } w \in \cc, \text{ or write } \sum_{e \ni w} x_e =0 \text{ if } w \notin \cc.
\end{align*}
Facet-defining inequalities:
\begin{align*}
{\rm(iii)}& \quad \text{For every boundary segment $e \in T'$ not incident to } \cc \text{, write }  x_e \geq 0.\\
{\rm(iv)}& \quad \text{For every pair of boundary segments $\{b,c\}$ of $T'$ that are opposite sides}\\
& \quad \text{of $Q_{\tau_a}$, where $a \in \Dg$ is a balanced diagonal, let the other pair of opposite sides }\\
& \quad \text{of $Q_{\tau_a}$ be $\{e,f\}.$ Exactly one of these three cases will hold for each pair $\{e,f\}$:}\\
& \qquad \text{- If } \{e,f\} \subset \{\tau_{i_2},\ldots,\tau_{i_{d-1}}\} \text{, write the inequality } x_a+x_b+x_c \leq 1.\\  
& \qquad \text{- If one of } \{e,f\} \text{ (say $e$) is a boundary segment of } T', \text{ write } x_e \geq 0.\\ 
& \qquad \text{- Otherwise, write } x_e \geq -1, \text{ where $e$ is diagonal } \tau_{i_1} \text { or } \tau_{i_d}.
\end{align*}
\end{theorem}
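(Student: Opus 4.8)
The plan is to translate Theorem~\ref{NTgFromTg} into the snake-graph language of Propositions~\ref{PGaAffHull} and~\ref{facetsPGaFromGa}, which already give a complete description of $\PG$ in terms of $\GTg$, and then use Remark~\ref{PGaNewtPolyOfNumerator} to pass from $\PG$ to $\NTg$ via the translation by $\indicator_{\Dg}$. Since $\PG = \NTg + \indicator_{\Dg}$, each defining equation or inequality for $\PG$ becomes a corresponding one for $\NTg$ by the substitution $x_e \mapsto x_e + \indicator_{\Dg}(e)$; that is, $x_e$ is unchanged for $e \notin \Dg$, and $x_e \mapsto x_e + 1$ for $e \in \Dg$. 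So the whole proof is a bookkeeping argument matching each clause of the theorem against the already-proved snake-graph description after this shift, using Lemma~\ref{SaFeatVsGaFeat} as the dictionary between features of $\GTg$ (interior edges, exterior edge pairs, corners, balanced/imbalanced diagonals) and features of $T'$.

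First I would establish the affine hull. Equations (i)--(ii) of the theorem should be shown equivalent to equations (iii)--(iv) of Proposition~\ref{PGaAffHull} after the $\indicator_{\Dg}$ shift. Proposition~\ref{PGaAffHull}(iv) reads $\sum_{e \ni w} x_e = |diagonals(w)|$ for $\PG$; applying the translation, for each diagonal $e \in \Dg$ incident to $w$ we decrease the right-hand side by $1$, and since $|diagonals(w)|$ counts exactly the edges of $\cc \cup \Dg$ at $w$, the right-hand side for $\NTg$ becomes $|diagonals(w)| - |\{e \in \Dg : e \ni w\}| = |\{e \in \{\cc\} : e \ni w\}|$, which is $1$ if $w \in \cc$ and $0$ otherwise. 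This is exactly Theorem~\ref{NTgFromTg}(ii). The equations (i) are unaffected by the shift since edges of $T \backslash T'$ never lie in $\Dg$, so Proposition~\ref{PGaAffHull}(iii) gives (i) directly.

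Next I would handle the facets by matching Proposition~\ref{facetsPGaFromGa}(i)--(iii) against Theorem~\ref{NTgFromTg}(iii)--(iv), again translating each inequality. Interior edges of $\GTg$ are, by Lemma~\ref{SaFeatVsGaFeat}, precisely the boundary segments of $T'$ not incident to $\cc$, and these are never in $\Dg$, so Proposition~\ref{facetsPGaFromGa}(i)'s inequality $x_e \geq 0$ is unchanged, giving Theorem~\ref{NTgFromTg}(iii). The facets from opposite exterior edge pairs correspond to non-corner tiles, i.e. to balanced diagonals $a \in \Dg$; the pair $\{b,c\}$ of sides of $Q_{\tau_a}$ that become the tile's opposite exterior edges are boundary segments of $T'$, while the complementary pair $\{e,f\}$ corresponds to the adjacent diagonals. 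The three sub-cases of (iv) correspond exactly to the three positions a tile can occupy: an interior non-corner tile (Proposition~\ref{facetsPGaFromGa}(iii), inequality $x_a + x_b + x_c \leq 2$, which under the shift $x_a \mapsto x_a + 1$ becomes $x_a + x_b + x_c \leq 1$), a tile where one exterior edge is a uniquely-labeled boundary segment (Proposition~\ref{facetsPGaFromGa}(ii), inequality $x_e \geq 0$, unchanged since $e \notin \Dg$), and the boundary tiles whose relevant edge is the uniquely-labeled diagonal $\tau_{i_1}$ or $\tau_{i_d}$ (Proposition~\ref{facetsPGaFromGa}(ii) with $e \in \Dg$, so $x_e \geq 0$ shifts to $x_e \geq -1$). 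I would verify via Lemma~\ref{SaFeatVsGaFeat} that these three cases partition all balanced diagonals, and that the case distinction on $\{e,f\}$ in the theorem correctly detects which tile position arises.

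The main obstacle I anticipate is not any single hard inequality but getting the case analysis in (iv) to line up exactly with the tile positions, particularly confirming that ``$\{e,f\} \subset \{\tau_{i_2},\ldots,\tau_{i_{d-1}}\}$'' is the precise condition for an interior (non-endpoint, non-corner) tile while the other two cases capture the four boundary tiles of Lemma~\ref{SaFeatVsGaFeat}, and that exactly one of the three cases applies to each balanced diagonal. Care is needed about which of $\{e,f\}$ is a boundary segment versus a diagonal, and about the degenerate small cases ($T'$ a triangulated quadrilateral or pentagon) flagged in Remark~\ref{facetPGaDetails}; I would treat these separately or note that Corollary~\ref{numFacets} and the shift argument still yield the correct count and description there.
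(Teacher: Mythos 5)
Your proposal follows essentially the same route as the paper's own proof: translate the already-established descriptions of $\PG$ (Proposition~\ref{PGaAffHull} for the affine hull, Proposition~\ref{facetsPGaFromGa} for the facets) through the shift $x_e \mapsto x_e + 1$ for $e \in \Dg$ coming from Remark~\ref{PGaNewtPolyOfNumerator}, using Lemma~\ref{SaFeatVsGaFeat} as the dictionary, and your right-hand-side computations (e.g.\ $|diagonals(w)| - |\{e \in \Dg : e \ni w\}|$ giving $1$ or $0$, and $x_a+x_b+x_c \leq 2$ becoming $\leq 1$) match the paper exactly.

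One correction: you assert that the pair $\{b,c\}$ of boundary-segment sides of $Q_{\tau_a}$ becomes the tile's pair of opposite exterior edges, with $\{e,f\}$ ``corresponding to the adjacent diagonals.'' In fact it is the other way around: $\{e,f\}$ become the opposite \emph{exterior} edges of the tile for $Q_{\tau_a}$, while $\{b,c\}$ become that tile's glued (interior) edges, and the label $a$ appears on the two flanking exterior edges of the neighboring tiles (this is visible in the paper's running example, where the tile for $Q_{\tau_4}$ has exterior edges $3,5$ and interior edges $7,11$, yielding $x_4+x_7+x_{11}\leq 2$). Taken literally, your sentence would make Proposition~\ref{facetsPGaFromGa}(iii) inapplicable ever, since $b$ and $c$ are uniquely labeled; fortunately your subsequent case analysis implicitly uses the correct identification (you apply case (iii) precisely when $\{e,f\}$ contains no unique labels), so the argument goes through once that sentence is fixed.
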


\begin{proof}
This proof will follow from previous propositions and the shift of 1 unit downward in the direction of each of the crossed diagonals $\Dg$ described in Remark \ref{PGaNewtPolyOfNumerator}. Specifically, statement (i) was proven in Proposition \ref{PGaAffHull}, and is not affected by the shift. 

For statement (ii), we will shift the variables from Proposition \ref{PGaAffHull}(iv). The 1 unit downward translation means that each instance of $x_e$ should be replaced with $(x_e + 1)$ if $e \in \Dg$, and left as $x_e$ if $e \notin \Dg$. Modifying Proposition \ref{PGaAffHull}(iv) in this way, we get 
$$|\{e \in \Dg: e \ni w\}| + \sum_{w \ni e} x_e = |diagonals(w)|.$$ 
If $w$ is not incident to $\cc$, then $\{e \in \Dg: e \ni w\}$ is  $diagonals(w)$ by definition, so canceling in the above equation, we get $\sum_{w \ni e} x_e = 0$. If $w$ is incident to $\cc$, then $|\{e \in \Dg: e \ni w\}| = 0$, while $|diagonals(w)| = 1$, so we get $\sum_{e \in E_w} x_e = 1$. Thus our new right-hand side is 0 if $w$ is not incident to $\cc$ and 1 if $w$ is incident to $\cc$.

For statement (iii), refer to Lemma \ref{SaFeatVsGaFeat} and Proposition \ref{facetsPGaFromGa}. Specifically, boundary segments of $T'$ that are not incident to $\cc$ become interior edges of $\GTg$ when the snake graph is formed, and boundary segments are not affected by the shift, so Proposition \ref{facetsPGaFromGa}(i) proves (iii). 

To prove the first case of (iv), suppose in $Q_{\tau_a}$ we have a pair of opposite sides that are boundary segments of $T'$, and both of the other two sides $\{e,f\}$ are in the set $\{\tau_{i_2},\ldots,\tau_{i_{d-1}}\}$. Then by Lemma \ref{SaFeatVsGaFeat}, their labels are not unique in $\GTg$, and by the construction of the snake graph, $e$ and $f$ form opposite exterior edges of a tile in $\GTg$ that has $a$ as the deleted diagonal, $b$ and $c$ as the other two sides, and flanking edges $a$ and $a$ (see figure in Proposition \ref{facetsPGaFromGa}). This places us exactly in the situation of statement (iii) of Proposition \ref{facetsPGaFromGa}. Of sides $a,b$, and $c$, only $a$ corresponds to a crossed diagonal, so replacing $x_a$ with $x_a+1$ in Proposition \ref{facetsPGaFromGa}(iii) proves the desired inequality that forms the first case.

To prove the second and third cases of (iv), suppose in $Q_{\tau_a}$ we have a pair of opposite sides that are boundary segments of $T'$, and at least one of the other two sides $\{e,f\}$ (without loss of generality, say $e$) is not a diagonal in the set $\{\tau_{i_2},\ldots,\tau_{i_{d-1}}\}$. By Lemma \ref{SaFeatVsGaFeat}, $e$ is a unique label in $\GTg$. Again, by construction of the snake graph, $e$ and $f$ form opposite exterior edges of a tile in $\GTg$ that has $a$ as the deleted diagonal, $b$ and $c$ as the other two sides, and flanking edges $a$ and $a$ (see figure in Proposition \ref{facetsPGaFromGa}). This places us exactly in the situation of statement (ii) of Proposition \ref{facetsPGaFromGa}. If $e$ is a boundary segment, then it is not affected by the shift, so Proposition \ref{facetsPGaFromGa}(ii) proves the second case inequality. If $e$ is diagonal $\tau_{i_1}$ or $\tau_{i_d}$, then it is affected by the shift of 1 unit downward, so replacing $x_e$ with $x_e+1$ in Proposition \ref{facetsPGaFromGa}(ii) gives the third case inequality.

\end{proof}

\begin{example}{\rm
Using the same example as in section \ref{Intro}, we will illustrate Theorem \ref{NTgFromTg}. 
\begin{itemize}
\item (i): Since edges 14 and 15 of $T$ do not appear in $T'$, we get $x_{14} = 0$ and $x_{15} = 0$. 
\item (ii): The other equations defining the affine hull of $\NTg$ come from each vertex of $T'$ and whether they are incident to $\cc$. For example, the edges incident to vertex $A$ are $\{1,2,3,4,7\}$, and $A$ is not incident to $\cc$, so we get the equation $x_1+x_2+x_3+x_4+x_7 = 0.$ 
\item (iii): Edges 7, 10, 11, and 12 are boundary segments of the triangulated polygon $T'$ that are not incident to $\cc$, so we get $x_7 \geq 0, x_{10} \geq 0, x_{11} \geq 0, x_{12} \geq 0$.
\item (iv): The pairs of boundary segments of $T'$ that are opposite sides of $Q_{\tau_a}$, where $a$ is a balanced diagonal in $\Dg$, are $\{1,12\}, \{7,11\}, \{7,10\}$, and $\{8,10\}$. The other pairs of opposite sides of each quadrilateral are, respectively, $\{3,13\}, \\
\{3,5\}, \{4,6\}$, and $\{5,9\}$. Since $\{\tau_{i_1},\tau_{i_2},\ldots,\tau_{i_{d-1}}, \tau_{i_d}\} = \{2,3,4,5,6\},$ these pairs fall into the following cases:
\begin{itemize}
\item Both of $\{3,5\}$ are in $\{\tau_{i_2},\ldots,\tau_{i_{d-1}}\}$. They are sides of $Q_{\tau_4}$, whose other two sides are 7 and 11. This gives $x_4+x_7+x_{11} \leq 1$.
\item The edge 13 is a boundary segment of $T'$, so $\{3,13\}$ gives $x_{13} \geq 0$. Similarly, $\{5,9\}$ gives $x_9 \geq 0$. 
\item The edges $\{4,6\}$ are not both in $\{\tau_{i_2},\ldots,\tau_{i_{d-1}}\}$, nor is either one a boundary segment of $T'$, so we get $x_6 \geq -1$.
\end{itemize}
\end{itemize}
Putting this all together, the affine hull and facets of $\NTg$ are given by
\begin{align*}
\text{affine hull:} \quad & x_{14} = 0, \; \; x_{15} = 0 \\
A: \quad & x_1+x_2+x_3+x_4+x_7 = 0 & B: \quad & x_5+x_6+x_7+x_8 = 0 \\
C: \quad & x_8+x_9 = 1 & D: \quad & x_6+x_9+x_{10} = 0 \\
E: \quad & x_4+x_5+x_{10}+x_{11} = 0 & F: \quad & x_3+x_{11}+x_{12} = 0 \\
G: \quad & x_2+x_{12}+x_{13} = 0 & H: \quad & x_1+x_{13} = 1 \\
\text {facets:} \quad & x_7 \geq 0, \; x_{10} \geq 0, \; x_{11} \geq 0, x_{12} \geq 0, \\
& x_{13} \geq 0, \; x_9 \geq 0, \; x_6 \geq -1, \; x_4+x_7+x_{11} \leq 1
\end{align*}
Note that Corollary \ref{numFacets} confirms that we have found all the facets (as computed above, the number of facets is $2d -1-t = 2(5) - 1 - 1 = 8.$)}
\end{example}

\section{Other Remarks and Conjectures} \label{Other}

Empirical evidence suggests that the polytope $\NTg$ contains no lattice points in its relative interior. The author hopes to prove this in a future paper.

If all the frozen variables $\{x_{n+1}, . . . ,x_{2n+3}\}$ (i.e. boundary segments of the polygon) are set equal to 1, the Newton polytope $\NTg$ is less elegant - there is a collapsing of monomials in the cluster expansion, and not every monomial corresponds to a vertex. For example, let $T$ be the triangulated hexagon below. 

\begin{figure}[h!]
\centering
\includegraphics[scale=0.38]{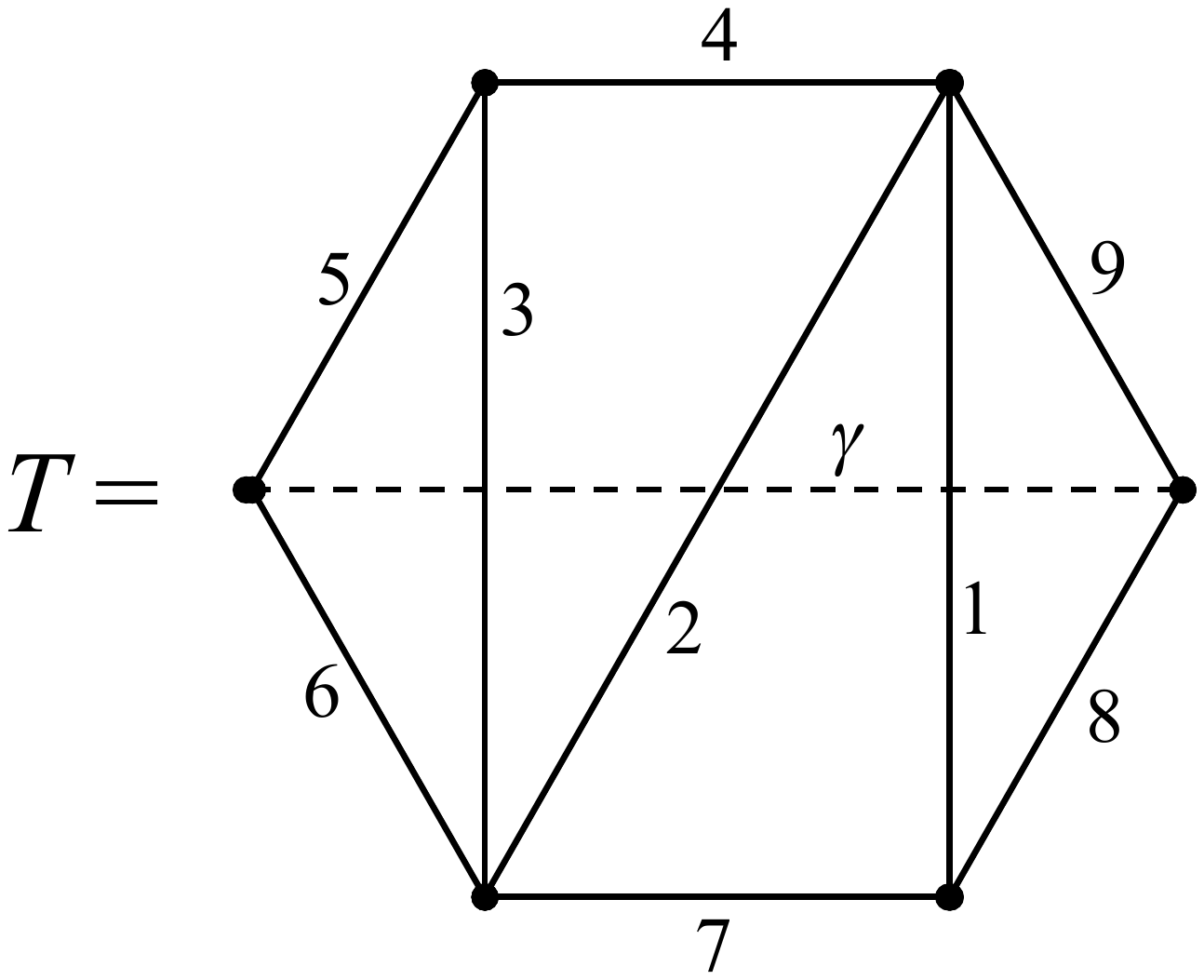}
\label{fig:hexagon}
\end{figure}

The corresponding snake graph is a 2-by-4 square grid graph. The $T$-expansion of $x_\cc$ is

$$\frac{x_2^2 x_5 x_8 + x_2 x_4 x_6 x_8 + x_1 x_3 x_6 x_9 + x_2 x_5 x_7 x_9 + x_4 x_6 x_7 x_9}{x_1 x_2 x_3}$$\\
and $\NTg$ is 3-dimensional with 5 vertices. These numbers correspond neatly to the 3 crossed diagonals and 5 terms in the numerator, as well as the 3 boxes and 5 perfect matchings of the corresponding snake graph. 

However, when the frozen variables $x_4$ through $x_9$ are all set equal to 1 in the above expression, it collapses to

$$\frac{x_2^2 + 2x_2 + x_1 x_3 + 1}{x_1 x_2 x_3}$$\\
and the Newton polytope becomes the convex hull of the four points $(-1,1,-1)$, $(-1,0,-1)$, $(0,-1,0)$, and $(-1,-1,-1)$. So there are 4 rather than 5 points to begin with, and one of them is not even a vertex (the point $(-1,0,-1)$ is the midpoint of the edge joining $(-1,1,-1)$ and $(-1,-1,-1)$). The polytope is also 2-dimensional rather than 3-dimensional (the original square pyramid has collapsed into a triangle). 

The construction of cluster algebras from triangulations of a polygon may be generalized to construct cluster algebras from triangulations of an arbitrary surface with marked points (\cite{FG1, FG2, FG3, GSV, FST}). In this setting, there is a generalization of the Laurent expansion formula using perfect matchings of snake graphs (\cite{MS, MSWp, MSW}). In a future paper, the author hopes to extend the results in this paper to more general surfaces. 

Empirically, many of the results of this paper do not hold when a more general surface is considered. For example, consider the annulus and corresponding snake graph that serves as the example in \cite{MS}, Section 7:

\begin{figure}[h!]
        \centering
        \begin{subfigure}[b]{0.5\textwidth}
                \centering
                \includegraphics[width=\textwidth]{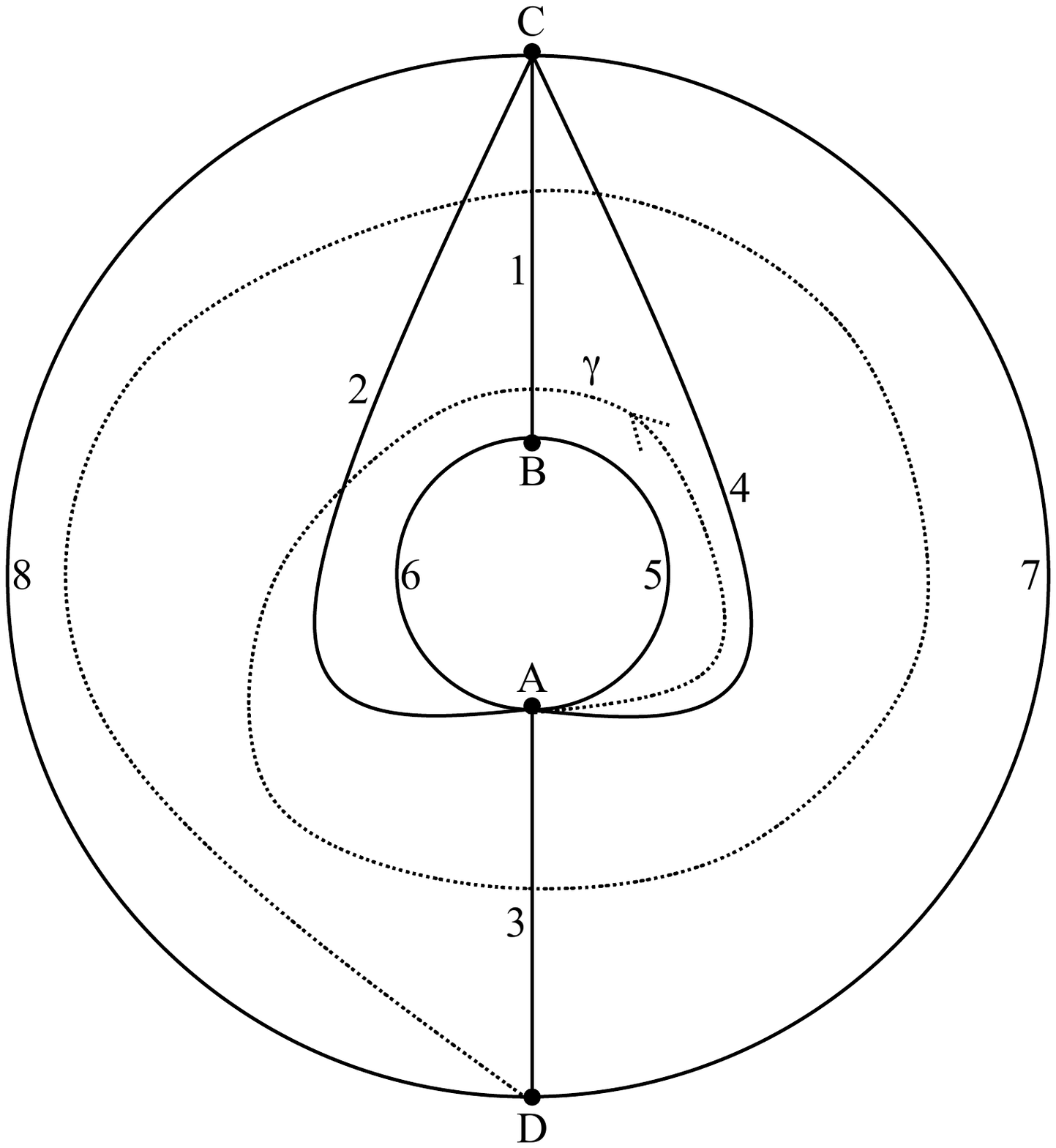}
                \label{fig:annulus}
        \end{subfigure}
        \qquad
        \begin{subfigure}[b]{0.4\textwidth}
                \centering
                \includegraphics[width=\textwidth]{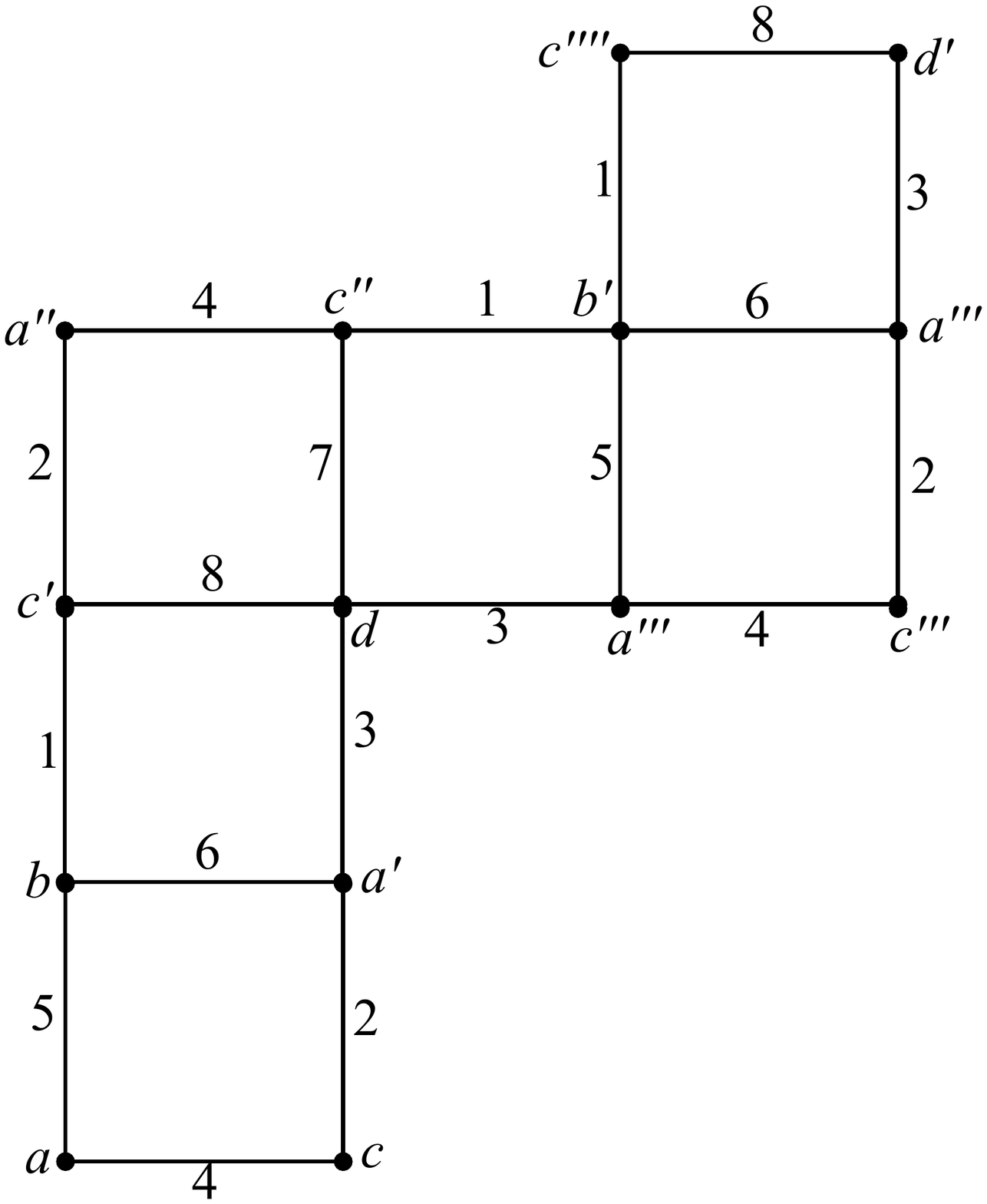}
                \label{fig:MS_G_gamma}
        \end{subfigure}
        \caption{Example from \cite{MS}, Section 7}
        \label{fig:MSexample}
\end{figure}

There are 17 matchings of the snake graph in Figure \ref{fig:MSexample}, but these 17 matchings give only 13 distinct monomials, because two different matchings may give the same monomial. For example, there are two different matchings that both give the monomial $x_1 x_2 x_3 x_4^2 x_5 x_8$. Therefore, there can be no bijection between matchings and vertices of $\GTg$. Moreover, there is no bijection between Laurent monomials and vertices either, because when these 13 distinct monomials are used to form the Newton polytope, only 9 of them correspond to vertices. So overall, our result on the isomorphism of lattices does not hold for more general surfaces. Using the principal coefficient system instead for this example, the same collapsing occurs, even if we leave all $x-$variables and $y-$variables as they are, setting nothing equal to 1. 

For surfaces other than a polygon, our results concerning the facets are not valid, and our affine hull description in Theorem \ref{NTgFromTg}(i)-(ii) seems only partially complete, in that the theorem seems to give some of the affine hull equations, but not necessarily all of them.
Soon, we hope to have a complete affine hull and facet description for cluster variables from more general surfaces.

\end{document}